\theoremstyle{plain}
\newtheorem{theorem}{Theorem}
\newtheorem{lemma}[theorem]{Lemma}
\newtheorem{proposition}[theorem]{Proposition}
\theoremstyle{conjecture}
\theoremstyle{question}
\theoremstyle{definition}
\newtheorem{definition}{Definition}
\newtheorem{examples}{Examples}
\newtheorem{example}{Example}
\theoremstyle{remark}
\newtheorem{remark}{Remark}[section]
\theoremstyle{remarks}
\newtheorem{remarks}{Remarks}[section]
\begin{document}
\title{Complex Groups and Root Subgroup Factorization}

\author{Doug Pickrell}
\email{pickrell@math.arizona.edu}

\maketitle

\begin{abstract} Root subgroup factorization is a refinement of triangular (or LDU) factorization. For a complex reductive Lie group, and a choice of reduced factorization of the longest Weyl group element, the forward map from root subgroup coordinates to triangular coordinates is polynomial. We show that the inverse is rational. There is an algorithm for the inverse (involving LDU factorization), and a related explicit formula for Haar measure in root subgroup coordinates. In classical cases there are preferred reduced factorizations of the longest Weyl group elements, and conjecturally in these cases there are closed form expressions for root subgroup coordinates.

\end{abstract}

\{2000 Mathematics Subject Classifications:  22E67\}

\setcounter{section}{-1}

\section{Introduction}

Let $G$ denote a connected complex Lie group with reductive Lie algebra $\mathfrak g$, e.g. $G=GL(n,\mathbb C)$.
Fix a linear triangular decomposition ${\mathfrak g}={\mathfrak n}^-+{\mathfrak h}+{\mathfrak n}^+$. If $H=\exp({\mathfrak h})$, $N^\pm=\exp({\mathfrak n}^\pm)$, and $W:=N_G(H)/H$ (the Weyl group), then there is a disjoint (Birkhoff) decomposition of $G$
\begin{equation}\label{birkhoff}
G=\bigsqcup_{w\in W}\Sigma_w^G \  \quad \mathrm{    where   }\ \quad  \Sigma_w^G=N^-wHN^+
\end{equation} We are primarily interested in the component corresponding to $w=1$. The component $\Sigma_1^G$ is Zariski open in $G$ and consists of group elements which have a triangular (or LDU)
factorization, $g=ldu$, where $l\in N^-$, $d\in H$, and $u\in N^+$. The LDU factorization is unique, and the factors are rational functions of $g$ (see Subsection \ref{concludingcomment}).

Root subgroup coordinates for a Zariski open subset of $\Sigma^G_1$ (and hence of $G$) involves an additional choice of a reduced factorization for the longest Weyl group element,
$$w_0=r_{\mathbf n}...r_1$$
in terms of reflections corresponding to simple positive roots. Given this factorization, there is a forward map (which depends on some minor additional
choices that we will temporarily suppress)
\begin{equation}\label{forwardmap}  F:(\mathbb C^{2})^{\mathbf n} \to N^-N^+:(\zeta_1,...,\zeta_{\mathbf n})\to lu\end{equation}
where
$$lu=i_{\tau_{\mathbf n}}(g(\zeta_{\mathbf n}))...i_{\tau_1}(g(\zeta_1)), \qquad \qquad
g(\zeta)=\left(\begin{matrix}1&\zeta^+\\\zeta^-&1+\zeta^-\zeta^+\end{matrix}\right)$$
and the $i_{\tau}:SL(2,\mathbb C)\to G$ are root homomorphisms. In addition to showing this forward map $F$ is well-defined and polynomial,
we will show that the inverse, properly understood, is rational.

\begin{example}\label{A2} For $G=GL(3,\mathbb C)$ and the factorization of $w_0=s_1s_2s_1$, where $s_i$ transposes $i$ and $i+1$, the forward
map $F$ sends $(\zeta_1,\zeta_2,\zeta_3)\in (\mathbb C^2)^3$ to
\[
\left(\begin{matrix}1 &0&0 \\0 &1 &\zeta_3^+ \\0 &\zeta_3^- & 1+\zeta_3^-\zeta_3^+ \end{matrix}\right)
\left(\begin{matrix}1 &0&\zeta_2^+ \\0&1&0\\\zeta_2^- &0&1+\zeta_2^-\zeta_2^+  \end{matrix}\right)
\left(\begin{matrix}1 &\zeta_1^+&0 \\\zeta_1^- &1+\zeta_1^-\zeta_1^+ &0 \\0 &0 & 1 \end{matrix}\right)\]
$$=\left(\begin{matrix}1 &0&0 \\\zeta_1^-+\zeta_2^-\zeta_3^+ &1 &0\\\zeta_2^-+\zeta_1^-\zeta_3^-+\zeta_2^-\zeta_3^-\zeta_3^+ &\zeta_3^- & 1 \end{matrix}\right)
\left(\begin{matrix}1 &\zeta_1^+&\zeta_2^+ \\0 &1 &-\zeta_1^-\zeta_2^+ +\zeta_3^+ \\0 &0 & 1 \end{matrix}\right)$$
The inverse map is given by
$$\zeta^-_{3} = l_{3, 2},\quad \zeta^-_{2} =l_{3, 1} -l_{2, 1}l_{3, 2},\quad \zeta_1^-= \frac{l_{2, 1}-l_{3, 1}u_{2, 3}+l_{2,1}l_{3, 2}u_{2, 3}}{1+l_{3, 1}u_{1, 3}-l_{2, 1}l_{3, 2}u_{1, 3}}, $$
$$\zeta^+_{3} = \frac{l_{2, 1}u_{1, 3}+u_{2, 3}}{1+l_{3, 1}u_{1, 3}-l_{2, 1}l_{3, 2}u_{1, 3}}, \quad\zeta^+_{2} = u_{1, 3}, \quad \zeta^+_{1} = u_{1, 2}$$
where $l_{ij}$ and $u_{ij}$ refer to standard coordinates for $l$ and $u$ respectively.

It is useful to factor $l$ and $u$ using `ordered exponential coordinates',
$$l=\left(\begin{matrix}1 &0&0 \\0 &1 &0\\0 &\zeta_3^- & 1 \end{matrix}\right)
 \left(\begin{matrix}1 &0&0 \\0 &1 &0\\\zeta_2^-&0 & 1 \end{matrix}\right)
 \left(\begin{matrix}1 &0&0 \\\zeta_2^-\zeta_3^++\zeta_1^- &1 &0\\0 &0 & 1 \end{matrix}\right)$$
and
 $$u=\left(\begin{matrix}1 &0&0 \\0 &1 &-\zeta_1^-\zeta_2^+ +\zeta_3^+ \\0 &0 & 1 \end{matrix}\right)
\left(\begin{matrix}1 &0&\zeta_2^+ \\0 &1 &0 \\0 &0 & 1 \end{matrix}\right)
\left(\begin{matrix}1 &\zeta_1^+&0\\0 &1 &0 \\0 &0 & 1 \end{matrix}\right)$$
In these coordinates the inverse map is given by
$$\zeta^-_{3} = l_{3},\quad \zeta^-_{2} =l_{2},\quad \zeta_1^-= \frac{l_{1}-l_{2}u_{3}}{1+l_{2}u_{2}}, $$
$$\zeta^+_{3} = \frac{u_2l_1+u_3}{1+l_2u_2},\quad \zeta^+_{2} = u_{2}, \quad \zeta^+_{1} =u_1 $$
To be more precise, $F$ induces a bijective correspondence
$$\{\zeta: 1+\zeta^-_2\zeta^+_2\ne 0\} \leftrightarrow \{(l,u)\in N^-\times N^+:1+l_2u_2\ne 0\}$$

There is a second reduced factorization $w_0=s_2s_1s_2$. In standard coordinates the exceptional set (i.e. the pairs $l,u$ not in the image of the forward map) is given by $1-l_{3,1}u_{1,3}-l_{3,1}u_{1,2}u_{2,3}=0$. The intersection of the exceptional sets corresponding to the two factorizations is a nonvacuous complex $4$ dimensional submanifold. As a consequence, even in this simplest example, root subgroup coordinate charts do not cover $\Sigma_1^G$.
\end{example}

\begin{example}\label{A3} For $G=GL(4,\mathbb C)$, a preferred ordering of the positive roots (a rationale for this preference is explained in Section \ref{canonicalorderings}) is indicated as follows:
$$\left(\begin{matrix} 0& \tau_1&\tau_2&\tau_4\\ & 0 &\tau_3&\tau_5\\ & & 0 & \tau_6\\ &  & &0\end{matrix}\right) $$
In ordered exponential coordinates for $l$ and $u$, there are six trivial equations,
$$u_1=\zeta_1^+,\qquad u_2=\zeta_2^+,\qquad u_4=\zeta_4^+ $$
$$l_4=\zeta_4^-,\qquad l_5=\zeta_5^-,\qquad l_6=\zeta_6^- $$
and (after making substitutions) six nontrivial equations,
$$u_3=\zeta_3^+-\zeta_1^-u_2,\qquad
u_5=\zeta_5^+-\zeta_1^-u_4-\zeta_2^-\zeta_3^+u_4,\qquad
u_6=\zeta_6^+-\zeta_2^-u_4-\zeta_3^-\zeta_5^+ $$
$$l_1=\zeta_1^-+\zeta_2^-\zeta_3^++l_4\zeta_5^+,\qquad l_2=\zeta_2^-+l_4\zeta_6^+-\zeta_3^-l_4\zeta_5^+,\qquad l_3=\zeta_3^-+l_5\zeta_6^+
$$
The fact that the equations are multilinear reflects the fact that $G$ is simply laced.
In this case $F$ induces a bijective correspondence between $\{\zeta:1+\zeta^-_k\zeta^+_k\ne 0, k=2,4,5\}$
and the complement in $N^-\times N^+$ of the vanishing set of the three denominators which appear in the formula for the rational inverse.
\end{example}

In general (see Theorem \ref{detjacobian})
\begin{equation}\label{detformula}det(\partial F)=\prod_k (1+\zeta^-_k\zeta^+_k)^{\delta(h_{\tau_k})-1}\end{equation}
where $\delta$ is half the sum of the positive roots and $h_{\tau}$ is the coroot corresponding to a root $\tau$. As the examples above illustrate, we will show that $F$ induces a bijective correspondence between $\{\zeta:det(\partial F)\ne 0\}$, i.e. the set of points where $F$ is locally bijective, and the complement in $N^-\times N^+$ of the vanishing set of the denominators that appear for the rational inverse of $F$. It is possible that this follows from general principles. In connection with this, it is interesting to recall (as Jack Hall pointed out to me) the Ax-Grothendieck theorem (and the related Jacobian conjecture), with a refinement and from a point of view due to Rudin, which asserts that an injective polynomial map $\mathbb C^n \to\mathbb C^n$ is bijective and has a polynomial inverse (see \cite{Ax},\cite{Groth},\cite{Rudin}). In our case, by examining an algorithm for solving for the $\zeta$ variables (which involves an LDU factorization for $g^{-t}$), we will simultaneously show that $F$ is injective on $\{\zeta:det(\partial F)\ne 0\}$ and that $F^{-1}$ is rational.

Given any ordering of the positive roots, $\tau_{1}$,...,$\tau_{\mathbf n}$, and choices of corresponding
root homomorphisms, one can consider a forward map
$$ (\mathbb C^{2})^{\mathbf n} \to G:(\zeta_1,...,\zeta_{\mathbf n})\to g$$
where
$$g=i_{\tau_{\mathbf n}}(g(\zeta_{\mathbf n}))...i_{\tau_1}(g(\zeta_1))$$
In general the image is not contained in $N^-HN^+$, i.e. the LDU coordinates are rational functions of the $\zeta$ variables.
It seems very plausible that this forward map is polynomial if and only if the ordering of the roots comes from a reduced factorization of $w_0$. Similarly, for a general ordering of the roots, the inverse is algebraic (but in general multivalued), and it seems
very plausible that the inverse is rational if and only if the ordering comes from a reduced factorization of $w_0$. For two characterizations of orderings which correspond to reduced factorizations, see \cite{Kras2}.

How special are the orderings that arise from factorizations of $w_0$? In the case of $GL(n,\mathbb C)$, there are $\left(\begin{matrix}n\\2\end{matrix}\right)$ positive roots, hence $\left(\begin{matrix}n\\2\end{matrix}\right)!$ orderings of the positive roots; Stanley \cite{stanley} showed that there are
$$\frac{\left(\begin{matrix}n\\2\end{matrix}\right)!}{1^{n-1}3^{n-2}..(2n-3)^1}$$
reduced factorizations of $w_0$. Kraskiewicz \cite{Kras1} showed that this formula has a representation theoretic interpretation. In the case of $B_r$ and $C_r$, there are $r^2$ positive roots, and Stanley conjectured, and Kraskiewicz \cite{Kras1} proved, that there are
$$\frac{r^2!}{\prod_{i=1}^n(2i-1)^{n-i}\prod_{j=0}^{n-3}\left(\prod_{k=1}^{n-j-2}(2(j+2k))\right)}$$ reduced factorizations of $w_0$.

\subsection{Notes}

There is a substantial literature on (a number of slightly different versions of) root subgroup factorization for a unitary form $U$ of $G$. Some of the main references are Kac-Peterson \cite{KP} (following seminal ideas of Steinberg on the structure of finite groups of Lie type), Bott-Samelson \cite{BottSamelson} (on the desingularization of Schubert varieties), Soibelman \cite{Soibelman} and Lu \cite{Lu} (Poisson geometry). For real noncompact groups, see \cite{CP}. One motivation for returning to this topic in a complex setting is the existence of a generalization to loop groups which has nontrivial consequences for the calculation of Toeplitz determinants, see \cite{BEP}. This connection with loop groups explains our motivation for referring to the decomposition (\ref{birkhoff}) as a Birkhoff decomposition, to distinguish it from the more algebraically natural Bruhat decomposition,
$$G=\bigsqcup_{W} B^+wB^+$$ We are trying to gain insight from this finite dimensional setting that will help us in more analytically challenging settings.

\subsection{Plan of the Paper}

In Section \ref{background} we recall some basic facts about triangular structures for Lie algebras and groups, and factorization for Weyl group elements. In Section \ref{polynomial} we show that the forward map is polynomial. In Section \ref{rational} we show that the inverse is rational. We will actually consider two different algorithms for solving for the inverse, one of which we can justify. In Section \ref{haar} we discuss a formula for Haar measure in root subgroup coordinates. Finally in Section \ref{canonicalorderings} we discuss canonical orderings in classical cases. For these canonical orderings, it appears that there are reasonable closed form expressions for root subgroup coordinates, but the nature of these formulas is unclear (as Hermann Flaschka suggested to me, it is possible these are related to a cluster algebra).

\subsection{Acknowledgement} I thank Jeremy Roberts, who assisted me with a number of experiments related to Section \ref{canonicalorderings}, and I thank Jack Hall for pointing me to the Ax-Grothendieck theorem and related discussions.

\section{Notation and Background\label{background}}

Let $\mathfrak a:=\mathfrak h_{\mathbb R}$, $\mathfrak t=i\mathfrak h_{\mathbb R}$, and write $h\in H$ as $h=ma$ relative to the global decomposition $H=TA$ ($T$ is the maximal torus of a unitary form $K$ for $G$, but $K$ will not play a significant role in this paper).

For each positive root $\alpha$ let $h_{\alpha}\in{\mathfrak a}$ denote the associated coroot (satisfying $\alpha(h_{\alpha})=2$).
If $\gamma$ is a simple positive root, fix a root homomorphism $\iota_{\gamma}\colon \mathfrak{sl}(2,{\mathbb C})\to{\mathfrak g}_{-\gamma}\oplus\mathbb C h_{\gamma}\oplus{\mathfrak g}_{\gamma}$ which maps the standard basis for $sl(2,\mathbb C)$
to $f_{\gamma}\in {\mathfrak g}_{-\gamma}$, $h_{\gamma}$, and $e_{\gamma}\in {\mathfrak g}_{\gamma}$  with $\tau(e_{\gamma})=-f_{\gamma}$. We denote the corresponding group homomorphism by the same symbol.
For each simple positive root $\gamma$, we use the group homomorphism to set
\begin{equation}\label{defn_of_r_gamma}
\mathbf r_{\gamma}=\iota_{\gamma}\begin{pmatrix} 0 & i \\ i & 0\end{pmatrix}\in N_{G}(H)
\end{equation}
and obtain a specific representative for the associated simple reflection $r_{\gamma}$ in the Weyl group $W:=N_{G}(H)/H$ (We will adhere to the convention of using boldface letters to denote representatives of Weyl group elements).

\begin{remark}\label{roothomomorphisms}
Throughout this paper we regard $\iota_{\gamma}$, $f_{\gamma}$ and $e_{\gamma}$ corresponding to a simple positive root $\gamma$ as fixed.
If $\eta$ is another positive root, then there is a Weyl group element $w$ such that $\eta=w\cdot \gamma$; by choosing a representative
$\mathbf w \in N_{G}(H)$ for $w$, we obtain a homomorphism $\iota_{\eta}(\cdot)=\mathbf w\iota_{\gamma}(\cdot)\mathbf w^{-1}$, and we denote the image of the standard basis by $f_{\eta}$, $h_{\eta}$ (which is consistent with the previous definition of the coroot corresponding to $\eta$), and $e_{\eta}$. These objects depend on the choice of $w$ and its representative $\mathbf w$.\end{remark}

By definition, the Birkhoff decomposition of $G$ relative to the triangular decomposition ${\mathfrak g}={\mathfrak n}^-+{\mathfrak h}+{\mathfrak n}^+$ is
\begin{equation}
G=\bigsqcup_{W} \Sigma^{G}_{
w} \text{ where }\Sigma^{G}_{ w}=N^- w B^+.
\end{equation}
If we fix a representative $\mathbf w\in N_{G}(H)$ for $w\in W$,
then each $g\in\Sigma^{G}_{w}$ can be factored uniquely as
\begin{equation}\label{birkhoff2}
g=l\mathbf {\mathbf w} ma u,\text{ with } l\in N^-\cap w N^-w^{-1},\ ma\in
TA, \text{ and }u\in N^+.
\end{equation}
This defines functions $l\colon \Sigma_{w}^{G}\to N^-\cap wN^-w^{-1}$, $m\colon \Sigma_{w}^{G}\to T$, $a\colon \Sigma_{w}^{G}\to A$, and $u\colon \Sigma_{w}^{G}\to N^+$. For fixed $m_0\in T$, the subset $\{g\in \Sigma^{G}_{ w}: m(g)=m_0\}$ is a stratum (topologically an affine space). It is therefore sensible and appropriate to refer to $\Sigma^{G}_{w}$ as the ``isotypic component of the Birkhoff decomposition of $G$ corresponding to $w\in W$."
However we may occasionally lapse into referring to  $\Sigma^{G}_{
w}$ as the ``Birkhoff stratum corresponding to $w$."

\subsection{Weyl Group and Orderings of Roots}

We recall a number of standard facts about the Weyl group $W$, see e.g. the Appendix of \cite{varadarajan}.
The Weyl group $W$ is generated by the simple reflections. For $w\in W$, $l(w)$ denotes the length of a minimal factorization
for $w$ in terms of simple reflections; in reference to the natural action of $W$ on roots, $l(w)$ is the number of positive roots which are flipped to negative roots by $w$. There is a unique element $w_0\in W$ of longest length; it maps the set of all positive roots (positive simple roots) to the set of negative roots (negative simple roots, respectively), hence its length is the number of positive roots (or the complex dimension of $G/B^+$).

\begin{lemma}\label{weylfactorization} Fix $w\in W$.  Choose a sequence of simple positive roots $\gamma_j$ and associated simple reflections $r_j$ in the following way: (1) choose $\gamma_1$ such that $w\cdot
\gamma_1>0$; (2) choose $\gamma_2$ such that $wr_1\cdot
\gamma_2>0$; (3) choose $\gamma_3$ such that $w r_1r_2\cdot
\gamma_3>0$, and so on, where $r_j$ is the simple reflection
corresponding to $\gamma_j$. This procedure terminates after $\mathbf n=l(w_0)-l(w)$ steps.  Let $\tau_j=r_1...r_{j-1}\cdot\gamma_j$. Then the $\tau_j$ are the
positive roots which are mapped to positive roots by
$w$ and $r_{\mathbf n}\dots r_1$ is a reduced decomposition of $w'=w_0w$.
\end{lemma}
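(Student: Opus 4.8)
The plan is to derive the statement from standard facts about the Weyl group and its root system (as collected in the Appendix of \cite{varadarajan}), being careful about the order in which the pieces are assembled. For $v\in W$ write $N(v)=\{\alpha>0:v\cdot\alpha<0\}$, so that $|N(v)|=l(v)$; the two facts that drive the argument are that, for a simple root $\gamma$ with reflection $r_\gamma$, one has $l(vr_\gamma)=l(v)+1$ iff $v\cdot\gamma>0$ and $l(vr_\gamma)=l(v)-1$ iff $v\cdot\gamma<0$, and that $l(w_0v)=l(w_0)-l(v)$ for every $v$, with $w_0$ sending every positive root to a negative one (hence reversing the sign of every root).

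First I would check that the greedy procedure is well defined and terminates precisely at $w_0$ after $\mathbf n:=l(w_0)-l(w)$ steps. The crucial observation is that if $v\in W$ and $v\ne w_0$ then some simple root $\gamma$ satisfies $v\cdot\gamma>0$. Indeed $l(w_0v)=l(w_0)-l(v)>0$, so $w_0v\ne 1$ and therefore has a simple (right) descent: there is a simple $\gamma$ with $l(w_0vr_\gamma)<l(w_0v)$, i.e. $(w_0v)\cdot\gamma<0$, i.e. $v\cdot\gamma>0$ since $w_0$ reverses signs. (Alternatively: if $v\cdot\gamma<0$ for every simple $\gamma$, an induction on the height of $\alpha$ shows $v\cdot\alpha<0$ for every positive root $\alpha$, forcing $l(v)=\#\{\text{positive roots}\}=l(w_0)$ and hence $v=w_0$.) Taking $v=wr_1\cdots r_{j-1}$ at the $j$th stage, a valid choice of $\gamma_j$ exists as long as we have not yet reached $w_0$, and the condition $wr_1\cdots r_{j-1}\cdot\gamma_j>0$ gives $l(wr_1\cdots r_j)=l(wr_1\cdots r_{j-1})+1$. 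The length thus increases by exactly $1$ at each stage and is bounded by $l(w_0)$, so the procedure halts; it can only halt at an element $v$ with $v\cdot\gamma<0$ for all simple $\gamma$, i.e. at $v=w_0$. Counting lengths, the number of stages is $l(w_0)-l(w)=\mathbf n$ and $wr_1r_2\cdots r_{\mathbf n}=w_0$.

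Next I would extract the reduced decomposition of $w'=w_0w$. From $wr_1\cdots r_{\mathbf n}=w_0$, left multiplication by $w^{-1}$ gives $r_1\cdots r_{\mathbf n}=w^{-1}w_0$, and taking inverses (using $r_j^{-1}=r_j$ and $w_0^{-1}=w_0$) gives $r_{\mathbf n}\cdots r_2r_1=w_0w=w'$. This writes $w'$ as a product of $\mathbf n$ simple reflections, and since $l(w')=l(w_0w)=l(w_0)-l(w)=\mathbf n$, the decomposition is reduced by definition; in particular the reversed word $r_1r_2\cdots r_{\mathbf n}=(w')^{-1}$ is also reduced.

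Finally I would identify the roots $\tau_j=r_1\cdots r_{j-1}\cdot\gamma_j$. Recall the standard description of the inversion set: if $s_{i_1}\cdots s_{i_k}$ is a reduced word for $v\in W$, then the roots $s_{i_1}\cdots s_{i_{m-1}}\cdot\gamma_{i_m}$, $m=1,\dots,k$, are $k$ pairwise distinct positive roots whose union is $N(v^{-1})$. Applying this to the reduced word $r_1r_2\cdots r_{\mathbf n}$ (which represents $(w_0w)^{-1}$) identifies its associated roots with the $\tau_j$, so the $\tau_j$ are distinct positive roots and $\{\tau_1,\dots,\tau_{\mathbf n}\}=N(w_0w)=\{\alpha>0:w_0w\cdot\alpha<0\}$. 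Since $w_0$ reverses the sign of every root, $w_0w\cdot\alpha<0$ is equivalent to $w\cdot\alpha>0$, so $\{\tau_1,\dots,\tau_{\mathbf n}\}$ is exactly the set of positive roots that $w$ maps to positive roots (a set of size $\#\{\text{positive roots}\}-l(w)=\mathbf n$, consistent with the step count, and for which $w\cdot\tau_j>0$ was already visible from the construction). The whole argument is bookkeeping once the standard facts are in hand; the only point that genuinely needs care is the first step — confirming that the greedy procedure stalls only at $w_0$ rather than earlier — together with keeping the conventions ($w_0w$ versus $ww_0$, $N(v)$ versus $N(v^{-1})$, left versus right multiplication) straight throughout. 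I anticipate no real obstacle beyond this.
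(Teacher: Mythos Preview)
Your proposal is correct and follows essentially the same approach as the paper: both derive the lemma from the standard facts about Weyl groups and reduced decompositions collected in the Appendix of \cite{varadarajan}. The paper's proof simply reformulates the statement in terms of $w'=w_0w$ and cites Theorem 4.15.10 of \cite{varadarajan} for the identification of the $\tau_j$, whereas you spell out the termination argument and the inversion-set description explicitly; but the underlying logic and the facts invoked are the same.
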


\begin{proof} This is more commonly expressed in the following way: Set $w'=w_0w$. Choose a
reduced decomposition $w'=r_{\mathbf n}...r_1$ where ${\mathbf n}=l(w_0)-l(w)=l(w')$.
Then for $n\le \mathbf n$, the positive roots $\tau_j$ which are mapped to negative roots by $r_n..r_1$ are of the form $\tau_j=r_1...r_{j-1}\cdot\gamma_j$, $j=1,..,n$; see Theorem 4.15.10 in the Appendix of \cite{varadarajan}.
\end{proof}

We are primarily interested in the case $w=1$. Given a reduced factorization $w_0=r_{\mathbf n}...r_1$ in terms of simple reflections, we obtain an ordering of the positive roots $\tau_1,...,\tau_{\mathbf n}$. Note that $\tau_1$ and $\tau_{\mathbf n}=w_0\cdot (-\gamma_{\mathbf n})$ are simple. For a general ordering of the positive roots $\tau_1,...,\tau_{\mathbf n}$, the Lemma gives an algorithm for determining whether this ordering comes from a factorization of $w_0$: (1) $\tau_1=\gamma_1$ must be simple, (2) there exists simple positive $\gamma_2$ such that $r_{1}\cdot
\gamma_2=\tau_2$; (3) there exists simple positive $\gamma_3$ such that $r_{1}r_{2}\cdot
\gamma_3=\tau_3$, and so on, where $r_j$ is the simple reflection corresponding to $\gamma_j$.

\begin{examples} \label{orderingexamples}Consider a reduced factorization $w_0=r_{\mathbf n}...r_1$ and the associated ordering of the positive roots $\tau_1,...,\tau_{\mathbf n}$.

(1) The ordering associated to the (reversed) reduced factorization $w_0=r_1...r_{\mathbf n}$ is $\gamma_{\mathbf n},r_{\mathbf n}\cdot \gamma_{\mathbf n-1},...  $

(2)  The ordering associated to the (conjugated) reduced factorization $w_0=r_{\mathbf n}^{w_0}...r_1^{w_0}$ is $w_0\cdot(-\gamma_{1}), w_0\cdot(-\tau_2),...w_0\cdot(-\tau_{\mathbf n})$

(3) The associated ordering to the (conjugated and reversed) reduced factorization $w_0=r_{1}^{w_0}...r_{\mathbf n}^{w_0}$ is the reversed ordering $\tau_{\mathbf n},...,\tau_1$.

\end{examples}

For examples of canonical factorizations in classical cases, see Section \ref{canonicalorderings}.
For characterizations of orderings of positive roots which come from factorizations of $w_0$, see \cite{Kras2}.

\section{The Forward Map is Polynomial}\label{polynomial}

Given a pair $\zeta=(\zeta^-,\zeta^+)\in \mathbb C^2$, define
\begin{equation}\label{factor_plus}
g(\zeta):=\begin{pmatrix}1&\zeta^+\\\zeta^-&1+\zeta^-\zeta^+
\end{pmatrix}=\begin{pmatrix}1&0\\\zeta^-&1
\end{pmatrix}\begin{pmatrix}1&\zeta^+\\0&1
\end{pmatrix} \in {\mathrm SL}(2,\mathbb C)
\end{equation}

\begin{proposition}\label{forwardmap} Fix $w\in W$ and a representative $\mathbf w\in N_{G}(H)$ for $w$, then determine positive simple roots $\gamma_1,\dots,\gamma_{\mathbf n}$ with associated simple reflections $r_1,\dots,r_{\mathbf n}$, and positive roots $\tau_1,\dots,\tau_{\mathbf n}$ as in Lemma \ref{weylfactorization}.  Set ${\mathbf w}_j'=\mathbf r_j...\mathbf r_1$ and $\iota_{\tau_j}(g)=({\mathbf w}_{j-1}')^{-1}\iota_{\gamma_j}(g){\mathbf w}_{j-1}'$ for each $g\in \mathrm{SL}(2,{\mathbb C})$.

(a) If $h\in H$, $(\zeta_1,\dots,\zeta_{\mathbf n})\in{\mathbb C}^{2\mathbf n}$, and
\[
g:=\mathbf w \,\iota_{\tau_{\mathbf n}}(g(\zeta_{\mathbf n}))..\iota_{\tau_1}(g(\zeta_1)) h
\]
then
 \[g\in \Sigma_w \]

(b) In reference to (\ref{birkhoff2}), $ma(g)=h$, and in exponential coordinates for $N^{\pm}$, $l$ and $u$
are polynomial functions of the $\zeta$ variables.

\end{proposition}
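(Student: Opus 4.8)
\emph{Proof proposal.} Using (\ref{factor_plus}) and the definition $\iota_{\tau_j}(\cdot)=({\mathbf w}'_{j-1})^{-1}\iota_{\gamma_j}(\cdot){\mathbf w}'_{j-1}$, write each factor as $\iota_{\tau_j}(g(\zeta_j))=\exp(\zeta_j^-f_{\tau_j})\exp(\zeta_j^+e_{\tau_j})$, with $f_{\tau_j}\in{\mathfrak g}_{-\tau_j}$ and $e_{\tau_j}\in{\mathfrak g}_{\tau_j}$ (this uses $(w'_{j-1})^{-1}\cdot\gamma_j=\tau_j$, Lemma \ref{weylfactorization}). Then $P:=\iota_{\tau_{\mathbf n}}(g(\zeta_{\mathbf n}))\cdots\iota_{\tau_1}(g(\zeta_1))$ is a product of $2{\mathbf n}$ one-parameter root subgroup elements attached, in order, to $-\tau_{\mathbf n},\tau_{\mathbf n},-\tau_{{\mathbf n}-1},\tau_{{\mathbf n}-1},\dots,-\tau_1,\tau_1$. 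The plan is to show $P$ can be rearranged to the form $P=l_Pu_P$ with $l_P\in N^-$, $u_P\in N^+$, both polynomial in the $\zeta$ variables in exponential coordinates, and then to finish by bookkeeping: $g=\mathbf wPh=(\mathbf wl_P\mathbf w^{-1})\,\mathbf w\,(u_Ph)$, where, since $w\cdot(-\tau_j)=-(w\cdot\tau_j)<0$ by Lemma \ref{weylfactorization}, $\mathbf wl_P\mathbf w^{-1}$ is supported on negative roots, hence lies in $N^-$, and it lies in $wN^-w^{-1}$, so in $N^-\cap wN^-w^{-1}$; writing $u_Ph=h\,(h^{-1}u_Ph)$ with $h^{-1}u_Ph\in N^+$ then exhibits $g\in\Sigma_w$ with $ma(g)=h$ and with the factors $l(g)=\mathbf wl_P\mathbf w^{-1}$ and $u(g)=h^{-1}u_Ph$ polynomial in $\zeta$ (conjugation by the fixed elements $\mathbf w$, $h$ is a fixed polynomial operation).

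To rearrange $P$ into $l_Pu_P$ I would move each lower factor $\exp(\,\cdot\,f_{\tau_a})$ to the left of each upper factor $\exp(\,\cdot\,e_{\tau_b})$ using the Chevalley commutator relations inside the rank $\le2$ subgroups $\langle U_{\pm\tau_a},U_{\pm\tau_b}\rangle$: passing $\exp(sf_{\tau_a})$ leftward past $\exp(te_{\tau_b})$ (the case that arises has $\tau_a\prec\tau_b$, i.e.\ $a<b$) produces, to the right, a product of root subgroup elements for the roots $i\tau_b-j\tau_a$ ($i,j\ge1$), with parameters equal to monomials in $s,t$ times fixed structure constants. The decisive input is that the ordering $\tau_1\prec\dots\prec\tau_{\mathbf n}$ of $\{\tau_j\}=I(w_0w)$ is convex and extends to a convex ordering of all of $\Delta^+$ of which $\{\tau_j\}$ is an initial segment (extend the reduced word of $w_0w$ to one of $w_0$ by appending a reduced word of $w$; cf.\ \cite{varadarajan}). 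Convexity forces: if $\tau_b-\tau_a>0$ then $\tau_b\prec\tau_b-\tau_a$, so the correction is an upper factor landing to the right of $\exp(\,\cdot\,e_{\tau_b})$, among the $e$'s where it belongs, and to the right of the position of any same-root lower factor; if $\tau_b-\tau_a<0$ then $\tau_a-\tau_b$ is a positive root with $\tau_a-\tau_b\prec\tau_a$, so $\tau_a-\tau_b=\tau_c$ for some $c<a$ (using that $\{\tau_j\}$ is an initial segment), hence the escaping lower factor has strictly smaller order-index and remains to the left of the unique upper factor $\exp(\,\cdot\,e_{\tau_c})$ with which it could collide. The same bookkeeping handles the higher correction roots $i\tau_b-j\tau_a$ in the non-simply-laced case, using the general fact that a sum of positive roots which is again a root lies between the smallest and largest summand in a convex order. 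Consequently the reordering is well founded — a suitable monovariant on configurations of factors strictly decreases under each elementary move — so it terminates after finitely many steps; and no $U_\alpha$ is ever brought to the left of a $U_{-\alpha}$, so no Cartan, and hence no denominator, is ever produced. The surviving factors, sorted within $N^-$ and within $N^+$, give $l_P$ and $u_P$ as polynomials in the original parameters, hence in $\zeta$.

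I expect the crux to be exactly the ``well founded / no collision'' claim of the second paragraph: choosing the right monovariant and verifying that the escaping lower and upper factors always land on the correct side of their same-root partners. This is where convexity — really biconvexity — of reduced-word orderings is essential, and where the non-simply-laced higher terms need separate checking. Two remarks. First, the essential case is $w=1$, for which $\{\tau_j\}$ is all of $\Delta^+$ carrying a genuinely convex order; the general case follows once one knows $\{\tau_j\}=I(w_0w)$ is an initial convex segment (again \cite{varadarajan}). Second, one could instead first establish $g\in\Sigma_w$ by a representation-theoretic argument — nonvanishing of the appropriate $w$-twisted matrix coefficients in fundamental representations evaluated on $\mathbf wPh$ — after which the Chevalley reordering is needed only to produce $l,u$ as polynomials, and the danger of a spurious torus contribution during the reordering no longer has to be excluded by hand.
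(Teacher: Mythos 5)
Your overall strategy --- write each factor as $\exp(\zeta_j^- f_{\tau_j})\exp(\zeta_j^+ e_{\tau_j})$ and sort all lower factors to the left of all upper factors via Chevalley commutator relations, controlling the correction roots by convexity of the reduced-word order --- is a legitimate classical route, and your final bookkeeping ($g=(\mathbf w l_P\mathbf w^{-1})\,\mathbf w h\,(h^{-1}u_P h)$, with the support of $l_P$ confined to $\{-\tau_j\}$ so that conjugation by $\mathbf w$ lands in $N^-\cap wN^-w^{-1}$) is exactly what is needed for $g\in\Sigma_w$ and $ma(g)=h$. But as written there is a genuine gap, and you have located it yourself: the assertion that the rewriting process terminates and that no upper factor for a root $\alpha$ ever has to be moved across a lower factor for the same $\alpha$ is stated (``a suitable monovariant\dots no Cartan\dots is ever produced'') but never proved. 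That is precisely the point where a torus component and denominators could enter, and verifying it requires tracking newly created lower corrections $f_{j\tau_a-i\tau_b}$ against newly created upper corrections $e_{i\tau_b-j\tau_a}$ through arbitrarily many generations of elementary moves; without a specified monovariant and a same-root no-collision argument, the heart of the claim (no $H$-component, polynomiality, and --- for general $w$ --- the confinement of the lower part to $\exp(\sum_j\mathbb C f_{\tau_j})$, which your conjugation-by-$\mathbf w$ step needs) is not established. Your closing suggestion to get part (a) separately by matrix coefficients is likewise only sketched.

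The paper avoids this bookkeeping entirely by inducting on the partial products $g^{(n)}=\iota_{\tau_n}(g(\zeta_n))\cdots\iota_{\tau_1}(g(\zeta_1))$: assuming $g^{(n-1)}=l^{(n-1)}u^{(n-1)}$ with $l^{(n-1)}\in N^-\cap (w'_{n-1})^{-1}N^+w'_{n-1}=\exp(\sum_{j<n}\mathbb C f_{\tau_j})$, it writes $g^{(n)}=\exp(\zeta_n^- f_{\tau_n})(\mathbf w'_{n-1})^{-1}\bigl(\exp(\zeta_n^+ e_{\gamma_n})\widetilde u\bigr)\mathbf w'_{n-1}u^{(n-1)}$ with $\widetilde u=\mathbf w'_{n-1}\,l^{(n-1)}\,(\mathbf w'_{n-1})^{-1}\in N^+$, and then factors $\exp(\zeta_n^+ e_{\gamma_n})\widetilde u$ inside the unipotent group $N^+$ relative to $N^+=\bigl(N^+\cap w'_{n-1}N^-(w'_{n-1})^{-1}\bigr)\bigl(N^+\cap w'_{n-1}N^+(w'_{n-1})^{-1}\bigr)$. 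Because the whole manipulation stays inside $N^+$ --- a simply connected nilpotent group in which multiplication and this product decomposition are polynomial --- no torus factor or denominator can possibly appear, and the inductive shape of $l^{(n)}$, hence the support condition needed for the final conjugation by $\mathbf w$, comes for free. If you want to keep the commutator-sorting route you must supply the monovariant and the no-collision verification in full (in effect reproving the normal-ordering theorem for convex orders, including the non-simply-laced corrections); the cheaper fix is to reorganize your argument one pair $\zeta_n$ at a time as the paper does, so that all reordering is absorbed into a single factorization within $N^+$.
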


\begin{remark} There is a notational subtlety in the statement of the lemma.  It can happen that for some $j$, $\tau_j=\gamma$ for some simple positive root $\gamma$.  Thus, the root homomorphism $\iota_{\tau_j}$ may depend on $j$, i.e., on the ordering of the roots (we commented on this earlier in Remark \ref{roothomomorphisms}).

\end{remark}

\begin{proof} We can suppose that $h=1$.

We first calculate the triangular decomposition for the partial product
\[
g^{(n)}:=\iota_{\tau_n}(g(\zeta_n))...\iota_{\tau_1}(g(\zeta_1))
\]
by induction on $n\le \mathbf n$. In the calculation, we will use the notation introduced in Remark \ref{roothomomorphisms}.
Note that since $\tau_j=(w_{j-1}')^{-1}\cdot \gamma_j>0$ and $\iota_{\tau_j}$ preserves triangular factorizations,
\begin{eqnarray*}
\iota_{\tau_j}(g(\zeta_j)) & = & \iota_{\tau_j}(
\begin{pmatrix}1&0\\\zeta_j^-&1
\end{pmatrix})\iota_{\tau_j}(
\begin{pmatrix}1&\zeta_j^+\\0&1
\end{pmatrix}) \\
& = & \exp(\zeta_j^-
f_{\tau_j})(\mathbf
w_{j-1}')^{-1}\exp(\zeta_j^+ e_{\gamma_j})\mathbf w_{j-1}'
\end{eqnarray*}
is a triangular factorization.

Suppose that $n=2$ (this case illustrates the core issues). Then
\begin{equation}\label{n=2case1}
g^{(2)}=\exp(\zeta_2^- f_{\tau_2})\mathbf
r_1^{-1}\exp(\zeta_2^+ e_{\gamma_2})\mathbf r_1 \exp(\zeta_1^-
f_{\gamma_1})\exp(\zeta_1^+
e_{\gamma_1})
\end{equation}
To obtain a triangular factorization, we must reorder the middle four terms in this product. First write
\begin{equation}\label{n=2_2}
 \mathbf r_1^{-1}\exp(\zeta_2^+ e_{\gamma_2})\mathbf r_1 \exp(\zeta_1^- f_{\gamma_1})=\mathbf r_1^{-1} \exp(\zeta_2^+ e_{\gamma_2})
\exp(\zeta_1^- e_{\gamma_1})\mathbf r_1 \end{equation}
Now $\gamma_1$ and $\gamma_2$ are positive roots, and $\gamma_1$ is the unique positive root mapped to a negative root
by $r_1$. Thus (\ref{n=2_2}) equals
$$ \mathbf r_1^{-1} \exp(\zeta_1^-
e_{\gamma_1})\widetilde u \mathbf r_1, \text{ where }\widetilde u=\exp(\zeta_2^+e_{\gamma_2}+\sum_{j\ge 1}\frac{1}{j!}\zeta_2^+(-\zeta_1^-ad(e_{\gamma_1}))^j(e_{\gamma_2}))$$
and $\tilde u\in N^+\cap r_1^{-1} N^+ r_1$ because the root vectors $(ad(e_{\gamma_1}))^j(e_{\gamma_2})$, $j\ge 0$, (which are not zero) correspond to positive roots which are mapped to positive roots by $r_1$: these roots have increasing height (as $j$ increases), $\gamma_2$ is the only one of height one,
and $\gamma_2\ne \gamma_1$. Thus (\ref{n=2_2}) equals
\begin{equation}\label{n=2_3}\exp(\zeta_1^-
f_{\gamma_1})\mathbf u, \quad (\text{for some }  \mathbf u\in
N^+), \end{equation}
and
\begin{equation}\label{n=2_4}\log(\mathbf u)=\sum_{j\ge 0}\frac{1}{j!}\zeta_2^+Ad(\mathbf r_1)^{-1}(-\zeta_1^-ad(e_{\gamma_1}))^j(e_{\gamma_2})\end{equation}

Insert this calculation into (\ref{n=2case1}). We then see
that $g^{(2)}$ has a triangular factorization $g^{(2)}=l^{(2)}u^{(2)}$, where
\begin{equation}\label{2ndcase1}
l^{(2)} =\exp(\zeta_2^-f_{\tau_2}) \exp(\zeta_1^- f_{\tau_1})= \exp(\zeta_2^-
f_{\tau_2}+\zeta_1^- f_{\tau_1})\end{equation}
(the second equality follows from the fact that a two dimensional nilpotent algebra is abelian), and
$$u^{(2)}=\mathbf u\exp(\zeta_1^+e_{\tau_1}) =\exp(\sum_{j\ge 0}\frac{1}{j!}\zeta_2^+Ad(\mathbf r_1)^{-1}(-\zeta_1^-ad(e_{\gamma_1}))^j(e_{\gamma_2})))\exp(\zeta_2^+e_{\tau_2})\exp(\zeta_1^+e_{\tau_1})   $$
For the sake of completeness, note that in simply laced cases, the terms in the sum will vanish for $j>1$, and hence the terms are multilinear, and fractions will not appear.
In general $u^{(2)}$ has the form
\begin{equation}\label{n=2case4}\left(\prod_{k>2}^{\leftarrow} \exp(u^{(2)}_k e_{\tau_k}) \right)\exp(\zeta_2^+e_{\tau_2})\exp(\zeta_1^+e_{\tau_1})\end{equation}
To see this note that $\tau_1$ and $\tau_2$ are the positive roots that are mapped to negative roots by $r_2r_1$,
whereas for $j>0$
$$Ad(\mathbf r_2 \mathbf r_1)Ad(\mathbf r_1)^{-1}(ad(e_{\gamma_1}))^j(e_{\gamma_2}))
=Ad(\mathbf r_2)(ad(e_{\gamma_1}))^j(e_{\gamma_2}))
$$ is a positive root vector, because the sole positive root mapped to a negative root is $\gamma_2$.

We now want to formulate the induction step. We assume that $g^{(n-1)}$ has a triangular
factorization $g^{(n-1)}=l^{(n-1)}u^{(n-1)}$ with
\begin{equation}\label{induction1}
l^{(n-1)}=\exp(\zeta_{n-1}^-f_{\tau_{n-1}})\widetilde l \in N^-\cap
(w_{n-1}')^{-1}N^+w_{n-1}'=\exp(\sum_{j=1}^{n-1}\mathbb C
f_{\tau_j}),
\end{equation}
for some $\widetilde l \in N^-\cap (w_{n-2}')^{-1}N^+w_{n-2}'=\exp(\sum_{j=1}^{n-2}\mathbb C f_{\tau_j})$.
We have established this for $n-1=1,2$. We will use induction to show that $g^{(n)}$
has a triangular factorization of the same form, with $n$ in place of $n-1$.

For $n \ge 3$
\begin{eqnarray*}
 g^{(n)} & = & \exp(\zeta_n^- f_{\tau_n})(\mathbf w_{n-1}')^{-1}\exp(\zeta_n^+ e_{\gamma_n})\mathbf w_{n-1}'\exp(\zeta_{n-1}^-
f_{\tau_{n-1}})\widetilde l u^{(n-1)} \\
 & = & \exp(\zeta_n^- f_{\tau_n})(\mathbf w_{n-1}')^{-1}\exp(\zeta_n^+e_{\gamma_n}) \widetilde u\mathbf w_{n-1}'u^{(n-1)},
\end{eqnarray*}
where $\widetilde u= \mathbf w_{n-1}'\exp(\zeta_{n-1}^- f_{\tau_{n-1}})\widetilde l (\mathbf w_{n-1}')^{-1}\in w_{n-1}' N^{-} (w_{n-1}')^{-1} \cap
N^+$. Now factor $\exp(\zeta_n^+e_{\gamma_n}) \widetilde
u\in N^+$ as $\widetilde u_1\widetilde u_2$, relative to the product decomposition
\[
N^+=\left(N^+\cap w_{n-1}'N^-(w_{n-1}')^{-1}\right)\left(N^+\cap
w_{n-1}'N^+(w_{n-1}')^{-1}\right)
\]
and let
\[
\mathbf l=(\mathbf
w_{n-1}')^{-1}\widetilde u_1 \mathbf w_{n-1}'\in N^- \cap
(w_{n-1}')^{-1} N^+
w_{n-1}'.
\]
Then $g^{(n)}$ has triangular decomposition
\begin{eqnarray*}
g^{(n)} & = & \left(\exp(\zeta_n^- f_{\tau_n})\mathbf
l\right) \left((\mathbf w_{n-1}')^{-1}\widetilde u_2\mathbf w_{n-1}'u^{(n-1)}\right) \\
& = & l^{(n)}u^{(n)}
\end{eqnarray*}

We have now completed the inductive calculation of the triangular decomposition for $g^{(n)}$ for $n\le \mathbf n$. Now suppose that we multiply this triangular decomposition  for $g^{(\mathbf n)}$ on the left by $\mathbf w$ (as in part (a)).
Because the $\tau_j$, $j=1,..,\mathbf n$, are the positive roots which are mapped
to positive roots by $w$, it follows that $l^{(\mathbf n)}$ will be conjugated by $\mathbf w$ into
another element in $N^-$.  It follows that $g$, as defined in part (a), is in $\Sigma^{G}_w$.

For part (b) note that the induction argument above shows that $ma(g)=1$. For $g$ as in the statement of the proposition,
with $h\in H$ appearing on the right, because $H$ normalizes $N^+$, $ma(g)=h$. The groups $N^{\pm}$ are simply connected nilpotent groups, hence the exponential maps $exp:\mathfrak n^{\pm}\to N^{\pm}$ are  polynomial maps with polynomial inverses. This implies the the forward map is polynomial (we will be more precise about the nature of this polynomial map in the next section).

\end{proof}

\section{The Inverse Map is Rational}\label{rational}

From now on we assume that $w=1$.

Fix a reduced factorization of the longest Weyl group element,
$$w_0=r_{\mathbf n}...r_1$$ This determines bases $\{e_{\tau_j}:j=1..\mathbf n\}$ and $\{f_{\tau_j}:j=1..\mathbf n\}$ for $\mathfrak n^{\pm}$, respectively. Because $N^{\pm}$ are simply connected nilpotent Lie groups, the product maps induce isomorphisms of spaces
$$\mathbb C^{\mathbf n} \to N^-: \vec{l} \to l=exp(l_{\mathbf n} f_{\tau_{\mathbf n}})...exp(l_1 f_{\tau_1}) $$
and
$$\mathbb C^{\mathbf n} \to N^+:\vec{u} \to u=exp(u_{\mathbf n} e_{\tau_{\mathbf n}})...exp(u_1 e_{\tau_1}) $$
We will refer to these as ordered exponential coordinates for $l$ and $u$, respectively. For the purposes of this paper, an elementary but crucial fact is that with respect to exponential coordinates for the images, these are polynomial maps with polynomial inverses.

For many purposes it is convenient to think of the index for $l_j$ to be the root $\tau_j$. We will
use the height of the root to associate a weight to the variable.

\begin{definition} The weight of
$\zeta_k^{\pm}$ is $\pm$ the height of the positive root $\tau_k$, and we additively extend this definition to products of such variables. Similarly the weight of $u_j$ and $l_j$ are defined to be $\pm$ height of $\tau_j$, respectively, and we additively extend this definition to products.
\end{definition}

\begin{lemma}\label{workhorse} Consider the map
$$  (\mathbb C^{2})^{\mathbf n} \to N^-N^+:(\zeta_1,...,\zeta_{\mathbf n})\to g=i_{\tau_{\mathbf n}}(g(\zeta_{\mathbf n}))...i_{\tau_1}(g(\zeta_1))$$

(a) The ordered exponential coordinates $u_j$ and $l_j$ can be expressed as (nonhomogeneous) polynomials of the $\zeta$ variables consisting of terms having weight equal to $\pm$ the height of $\tau_j$, respectively.

(b) $l_j-\zeta_j^-$ has no dependence on $\zeta_k^{\pm}$ for $k\le j$; moreover $l_{\mathbf n}=\zeta_{\mathbf n}^-$ and $l_{\mathbf n-1}=\zeta_{\mathbf n-1}^-$.

(c) $u_j-\zeta_j^+$ has no dependence on $\zeta_k^{\pm}$ for $k\ge j$; moreover $u_1=\zeta_1^+$ and $u_2=\zeta_2^+$.

(d) If $\mathfrak g$ is simply laced, then $l_j$ and $u_j$ are multilinear functions of the $\zeta$ variables.
\end{lemma}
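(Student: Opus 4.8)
The plan is to read (a) and (d) off the torus action, and to obtain (b), (c) by refining the inductive triangular-factorization algorithm in the proof of Proposition~\ref{forwardmap}. Throughout I would use the identity $\iota_{\tau_j}(g(\zeta))=\exp(\zeta^- f_{\tau_j})\exp(\zeta^+ e_{\tau_j})$ recorded in that proof, so that $g=\iota_{\tau_{\mathbf n}}(g(\zeta_{\mathbf n}))\cdots\iota_{\tau_1}(g(\zeta_1))$ is an explicit product of exponentials, together with the polynomiality of $l_j,u_j$ already established. For (a): the forward map is $H$-equivariant, conjugation by $h\in H$ replacing $(\zeta_k^-,\zeta_k^+)$ by $(\tau_k(h)^{-1}\zeta_k^-,\tau_k(h)\zeta_k^+)$ because $\mathrm{Ad}(h)$ scales $f_{\tau_k}$ and $e_{\tau_k}$ by $\tau_k(h)^{-1}$ and $\tau_k(h)$. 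Uniqueness of $ldu$ factorization then forces $l(h\cdot\zeta)=h\,l(\zeta)\,h^{-1}$ and $u(h\cdot\zeta)=h\,u(\zeta)\,h^{-1}$, hence $l_j(h\cdot\zeta)=\tau_j(h)^{-1}l_j(\zeta)$ and $u_j(h\cdot\zeta)=\tau_j(h)\,u_j(\zeta)$ for the ordered exponential coordinates. As distinct $\zeta$-monomials are linearly independent and $\prod_k(\zeta_k^-)^{a_k}(\zeta_k^+)^{b_k}$ carries the $H$-character of the weight $\sum_k(b_k-a_k)\tau_k$, every monomial of $l_j$ (resp.\ $u_j$) must have weight $-\tau_j$ (resp.\ $+\tau_j$); applying the height function, which is additive on weights, yields (a).

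For (b), (c) I would work with the partial products $g^{(k)}=\iota_{\tau_k}(g(\zeta_k))\cdots\iota_{\tau_1}(g(\zeta_1))$, which equal $F(\zeta)$ with $\zeta_i=0$ for $i>k$, so their $ldu$ factors are $l^{(k)}=l|_{\zeta_i=0,\ i>k}$ and $u^{(k)}=u|_{\zeta_i=0,\ i>k}$, and with the inductive step from the proof of Proposition~\ref{forwardmap}: if $\exp(\zeta_k^+ e_{\tau_k})l^{(k-1)}=\mathbf l^{(k)}\tilde u^{(k)}$ is the factorization with $\mathbf l^{(k)}\in N^-\cap(w'_{k-1})^{-1}N^+w'_{k-1}$ and $\tilde u^{(k)}\in N^+_{\ge k}:=\exp\bigl(\bigoplus_{i\ge k}\mathbb C e_{\tau_i}\bigr)$, then $l^{(k)}=\exp(\zeta_k^- f_{\tau_k})\mathbf l^{(k)}$ and $u^{(k)}=\tilde u^{(k)}u^{(k-1)}$; in particular neither $l^{(k)}$ nor $u^{(k)}$ involves $\zeta_k^-$. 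The key structural facts are that $N^+_{\ge k}=N^+\cap(w'_{k-1})^{-1}N^+w'_{k-1}$ is a subgroup and — since $\gamma_k$ is simple and $\tau_1,\dots,\tau_{k-1}$ are precisely the positive roots inverted by $w'_{k-1}$ — that $N^+_{\ge k+1}$ is normal in $N^+_{\ge k}$ with quotient $\cong\mathbb C$, so the quotient homomorphisms $\pi_k$ return the $k$-th ordered exponential coordinate. Since these coordinate subgroups are \emph{not} normal in $N^+$, one must argue along this subnormal filtration: writing $u=\bigl(\tilde u^{(\mathbf n)}\cdots\tilde u^{(j+1)}\bigr)u^{(j)}$ with the first factor in $N^+_{\ge j+1}\subseteq\ker\pi_m$ for every $m\le j$, the recursive computation of ordered exponential coordinates gives $u_m=(u^{(j)})_m$ for all $m\le j$. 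In particular $u_j=(u^{(j)})_j$, which involves only $\zeta_1,\dots,\zeta_j$ and not $\zeta_j^-$, and $=(\tilde u^{(j)})_j+(u^{(j-1)})_j$ with $(u^{(j-1)})_j\in\mathbb C[\zeta_1,\dots,\zeta_{j-1}]$. The delicate remaining point is that $(\tilde u^{(j)})_j=\zeta_j^+$ exactly: conjugating by $\mathbf w'_{j-1}$, the element $\tilde u^{(j)}$ is assembled from $\exp(\zeta_j^+ e_{\gamma_j})$ by reorderings producing only root vectors of strictly larger height, and $e_{\gamma_j}$ — $\gamma_j$ being simple — is the unique height-one generator surviving modulo $\mathrm{Ad}(\mathbf w'_{j-1})N^+_{\ge j+1}$. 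Thus $u_j=\zeta_j^++(u^{(j-1)})_j$, which is (c), the "moreover" following from $u^{(0)}=1$ and $u^{(1)}=\exp(\zeta_1^+ e_{\tau_1})$. Statement (b) is then obtained by applying (c) to the opposite reduced factorization of $w_0$ — whose associated ordering is the reverse ordering $\tau_{\mathbf n},\dots,\tau_1$ (Examples~\ref{orderingexamples}) — and transporting the result back via the Chevalley involution $e_\alpha\mapsto -f_\alpha$, which exchanges $N^\pm$ and, applied to the product formula for $g$ and then inverted, realizes the forward map of the opposite factorization with $\zeta^\pm$ interchanged, matching its $l$-coordinates with the $u$-coordinates of $F$.

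For (d), in the simply-laced case every $\alpha$-string of roots has length at most two, so the only reordering move used in the proof of Proposition~\ref{forwardmap}, namely $\exp(xe_\alpha)\exp(ye_\beta)=\exp(ye_\beta)\exp(c\,xy\,e_{\alpha+\beta})\exp(xe_\alpha)$ (with the last factor absent unless $\alpha+\beta$ is a root), introduces one new generator bilinear in $x,y$ and raises no exponent; an induction on $\mathbf n$ keeps $l_j,u_j$ multilinear, which is the mechanism behind the absence of denominators in Examples~\ref{A2} and \ref{A3}. I expect (b)/(c) to be the main obstacle: part (a) together with the specialization $\zeta_i=0\ (i>j)$ readily shows that $l_j-\zeta_j^-$ is a sum of monomials each involving \emph{some} $\zeta_k$ with $k>j$, but upgrading this to "only" such $\zeta_k$ forces one to follow the algorithm, and the two technical nuisances are the non-normality of the coordinate subgroups of $N^\pm$ (handled by the subnormal filtration) and the hand verification of the diagonal identity $(\tilde u^{(j)})_j=\zeta_j^+$.
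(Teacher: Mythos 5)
Your proposal is correct in substance and shares the paper's global architecture --- induction over the partial products $g^{(n)}$ using the factorization step from the proof of Proposition \ref{forwardmap}, an involution exchanging the $l$- and $u$-claims, and vanishing of higher $\mathrm{ad}$-terms for (d) --- but two of your sub-arguments genuinely differ. For (a) you use $H$-equivariance of the forward map and linear independence of characters to pin the full $H$-weight of every monomial of $l_j$ (resp.\ $u_j$) to $-\tau_j$ (resp.\ $\tau_j$) and then apply the height functional; the paper just cites the height grading of $\mathfrak g$, so yours is a complete (and slightly finer) version of the same fact. For (b)/(c) the orientation is mirrored: the paper proves the $l$-statements directly, via Lemma \ref{techrootlemma} and the explicit term-by-term reordering of (\ref{product7}) that tracks how each $l^{(n-1)}_j$ is corrected, and then gets (c) from the transpose/reversed-ordering symmetry; you prove the $u$-statements directly using the subnormal chain $N^+_{\geq k}=N^+\cap (w'_{k-1})^{-1}N^+w'_{k-1}$ with one-dimensional quotients $\pi_k$, which makes ``$u_m$ for $m\le j$ depends only on $u^{(j)}$'' essentially formal and concentrates all the work in the single identity $(\tilde u^{(j)})_j=\zeta_j^+$; your height argument for that identity is sound (every commutator correction has height at least $2$, the complementary factor lies in $N^+\cap w'_{j-1}N^-(w'_{j-1})^{-1}$, so the height-one component of the logarithm of the $N^+\cap w'_{j-1}N^+(w'_{j-1})^{-1}$-factor is exactly $\zeta_j^+e_{\gamma_j}$). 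Statement (b) then follows by your Chevalley-involution-plus-inversion transfer, which is the paper's transpose trick run in the opposite direction; the order reversal of factors under an anti-automorphism is precisely what makes the two ordered-coordinate conventions match, as you note. What each approach buys: the paper's explicit reordering feeds directly into Remark \ref{simplylacedremark}, part (d), and the later algorithmic discussion, while your filtration argument avoids Lemma \ref{techrootlemma} entirely and cleanly separates the soft structural part from the one genuine computation.

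Two small points. The clause ``neither $l^{(k)}$ nor $u^{(k)}$ involves $\zeta_k^-$'' should refer to $\mathbf l^{(k)}$ (or $\tilde u^{(k)}$) rather than $l^{(k)}$, since $l^{(k)}=\exp(\zeta_k^- f_{\tau_k})\mathbf l^{(k)}$ visibly does involve $\zeta_k^-$; what your argument actually uses --- that $u^{(k)}$ does not --- is correct. And your treatment of (d) is at the same level of detail as the paper's one-line argument: neither addresses the products of previously computed coefficients generated by iterated commutators when correction terms are pushed past the remaining factors, so it is a sketch, but not a gap relative to the paper's own proof.
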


\begin{remark}It is essential that we are considering ordered exponential coordinates. Parts (b) and (c) are not true for ordinary
exponential coordinates, or for (in the general linear case) standard coordinates; see the example $G=GL(3,\mathbb C)$. \end{remark}

\begin{proof} We will use the same notation which we developed in the proof of Proposition \ref{forwardmap}, in particular
\[
g^{(n)}:=\iota_{\tau_n}(g(\zeta_n))...\iota_{\tau_1}(g(\zeta_1))=l^{(n)}u^{(n)}
\] for $n\le \mathbf n$. The truncated product $g^{(n)}$ is the result of setting $\zeta_j^{\pm}=0$ for $j>n$. We will prove that the statements in (a), (b) and (d) concerning the $l_j$ are true for $g^{(n)}$ by induction (where in part (b) we keep in mind that $\zeta_j^{\pm}=0$ for $j>n$). After accomplishing this, we use a Cartan involution symmetry (which is broken for $n<\mathbf n$) to show that the statements concerning the $u_j$ also hold.

When $n=2$ (see (\ref{2ndcase1}))
$$l^{(2)} =\exp(\zeta_2^-f_{\tau_2}) \exp(\zeta_1^- f_{\tau_1}) $$
Thus $l^{(2)}_{i}=\zeta^-_i$, $i=1,2$.

Now suppose that the statements in (a), (b) and (d) for $l^{(k)}_j$ hold for $g^{(k)}=l^{(k)}u^{(k)}$, $k<n$, where the second part of (b) is modified to $l^{(k)}_{k}=\zeta_{k}^-$ and
$l^{(k)}_{k-1}=\zeta_{k-1}^-$. The explicit expression above for $l^{(2)}$ establishes this for $k=1,2$.
For $n \ge 3$, $g^{(n)}$ equals

\begin{equation}
\exp(\zeta_n^- f_{\tau_n})(\mathbf w_{n-1}')^{-1}\left(\exp(\zeta_n^+ e_{\gamma_n})\prod_{1\le j<n}^{\leftarrow}\exp(l_j^{(n-1)}e_j')\right)\mathbf w_{n-1}' \prod_{j}^{\leftarrow}\exp(u_j^{(n-1)}e_{\tau_j})\end{equation}
where (to simplify the notation) we have set  $e'_j:=Ad(\mathbf w_{n-1}')(f_{\tau_j})$, which is a root vector corresponding to the positive root $\tau'_j:=-w_{n-1}'\cdot\tau_j$, $j=1,..,n-1$.
Note that there is a restriction on the product expression for $l^{(n-1)}$ (see (\ref{induction1})), but not for $u^{(n-1)}$ (We are not  concerned about the form of $u^{(n)}$).
Following the argument in the proof of Proposition \ref{forwardmap}, we need to factor
\begin{equation}\label{product4}\exp(\zeta_n^+e_{\gamma_n})\prod_{1\le j<n}^{\leftarrow}\exp(l_j^{(n-1)}e_j') \in N^+\end{equation} relative to the decomposition
\begin{equation}\label{groupdecomposition}
N^+=\left(N^+\cap w_{n-1}'N^-(w_{n-1}')^{-1}\right)\left(N^+\cap
w_{n-1}'N^+(w_{n-1}')^{-1}\right)
\end{equation}
i.e. we have to move $\exp(\zeta_n^+e_{\gamma_n})\in N^+\cap
w_{n-1}'N^+(w_{n-1}')^{-1}$ to the right of the product, which is in $N^+\cap w_{n-1}'N^-(w_{n-1}')^{-1}$.
To do this, we need a lemma concerning the commutation relations for these two subalgebras.

There is a filtration of $n^+\cap w_{n-1}'\mathfrak n^-(w_{n-1}')^{-1}$ by subalgebras
\begin{equation}\label{filtration1}\mathbb C e'_1\subset \bigoplus_{1\le j\le 2}\mathbb C e'_j \subset ...\subset \bigoplus_{1\le j< n}\mathbb C e'_j\end{equation} The $j$th subalgebra is spanned by root vectors corresponding to positive roots which are mapped to negative roots
by $(w_j')^{-1}$.

\begin{lemma}\label{techrootlemma} For each $j\le n$, the (direct) sum of $\oplus_{1\le i<j}\mathbb C e'_i$ and $\mathfrak n^+\cap w_{n-1}'\mathfrak n^+(w_{n-1}')^{-1}$ is a subalgebra. Consequently, augmenting (\ref{filtration1}), there is an increasing sequence of subalgebras (each with a direct sum decomposition into two subalgebras)
\begin{equation}\label{filtration2}\mathfrak n_j^{+(n)}:=\left(\bigoplus_{1\le i\le j}\mathbb C e'_i\right)\bigoplus\left(\mathfrak n^+\cap w_{n-1}'\mathfrak n^+(w_{n-1}')^{-1}\right), \qquad j=1,...,n\end{equation} which filters $\mathfrak n^+$.
\end{lemma}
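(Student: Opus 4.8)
\emph{Strategy.} The plan is to realize each space appearing in the lemma as an intersection of the form $\mathfrak n^{+}\cap \mathrm{Ad}(\mathbf u)\,\mathfrak n^{+}$ for a suitable representative $\mathbf u$ of a Weyl group element. Such a space, being the intersection of two subalgebras, is automatically a nilpotent subalgebra, so the whole lemma reduces to bookkeeping with inversion sets of initial products of the reduced word. Write $\Phi^{+}$ for the set of positive roots and, for $v\in W$, $N(v):=\{\alpha\in\Phi^{+}:v\cdot\alpha<0\}$, so $|N(v)|=l(v)$.

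\emph{Step 1: weight sets.} The space $\mathfrak a:=\mathfrak n^{+}\cap w_{n-1}'\mathfrak n^{+}(w_{n-1}')^{-1}$ is the intersection of the subalgebras $\mathfrak n^{+}$ and $\mathrm{Ad}(\mathbf w_{n-1}')\mathfrak n^{+}$, hence a subalgebra, with weight set $\Theta:=\{\alpha\in\Phi^{+}:(w_{n-1}')^{-1}\cdot\alpha>0\}$; similarly $\mathfrak n^{+}\cap w_{n-1}'\mathfrak n^{-}(w_{n-1}')^{-1}=\bigoplus_{i=1}^{n-1}\mathbb C e'_{i}$ has weight set $\Psi:=\Phi^{+}\setminus\Theta=\{\tau'_{1},\dots,\tau'_{n-1}\}$, where $e'_{i}$ has weight $\tau'_{i}=-w_{n-1}'\cdot\tau_{i}$. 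Using $\tau_{i}=r_{1}\cdots r_{i-1}\cdot\gamma_{i}$, the product $(r_{n-1}\cdots r_{1})(r_{1}\cdots r_{i-1})$ telescopes to $r_{n-1}\cdots r_{i}$, and since $r_{i}\cdot\gamma_{i}=-\gamma_{i}$ one gets the explicit formula
\[\tau'_{i}=r_{n-1}r_{n-2}\cdots r_{i+1}\cdot\gamma_{i},\qquad i=1,\dots,n-1\]
(so $\tau'_{n-1}=\gamma_{n-1}$, and each successive $\tau'_{i}$ is obtained by one more reflection on the left). Also, by Lemma \ref{weylfactorization} the positive roots sent to negative by $w_{j}'=r_{j}\cdots r_{1}$ are exactly $\tau_{1},\dots,\tau_{j}$, so $\mathfrak n^{-}\cap (\mathbf w_{j}')^{-1}\mathfrak n^{+}\mathbf w_{j}'$ (again an intersection of two subalgebras) has weight set $\{-\tau_{1},\dots,-\tau_{j}\}$ and equals $\bigoplus_{i=1}^{j}\mathbb C f_{\tau_{i}}$; conjugating by $\mathrm{Ad}(\mathbf w_{n-1}')$ shows $\bigoplus_{i=1}^{j}\mathbb C e'_{i}$ is a subalgebra with weight set $\{\tau'_{1},\dots,\tau'_{j}\}\subseteq\Psi$. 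Thus $\mathfrak n_{j}^{+(n)}$ is already displayed as a direct sum of two subalgebras, and it remains only to see that the sum itself is a subalgebra.

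\emph{Step 2: the sum is a subalgebra.} Fix $j$ and set $\mathbf u_{j}:=\mathbf r_{j+1}\mathbf r_{j+2}\cdots\mathbf r_{n-1}$, a representative of $u_{j}:=r_{j+1}\cdots r_{n-1}$ (the empty product when $j\ge n-1$). Since $u_{j}$ is a terminal contiguous substring of the reduced word $(w_{n-1}')^{-1}=r_{1}r_{2}\cdots r_{n-1}$, it is reduced, and the standard enumeration of $N(u_{j})$ read from the rightmost letter of $u_{j}$ inward gives $N(u_{j})=\{\gamma_{n-1},\ r_{n-1}\gamma_{n-2},\ \dots,\ r_{n-1}\cdots r_{j+2}\gamma_{j+1}\}=\{\tau'_{j+1},\dots,\tau'_{n-1}\}$, by the formula of Step 1. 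Hence the weight set of $\mathfrak n_{j}^{+(n)}$ is
\[\{\tau'_{1},\dots,\tau'_{j}\}\cup\Theta=\Phi^{+}\setminus\{\tau'_{j+1},\dots,\tau'_{n-1}\}=\Phi^{+}\setminus N(u_{j})=\{\alpha\in\Phi^{+}:u_{j}\cdot\alpha>0\},\]
so $\mathfrak n_{j}^{+(n)}=\bigoplus_{\alpha\in\Phi^{+},\,u_{j}\cdot\alpha>0}\mathfrak g_{\alpha}=\mathfrak n^{+}\cap \mathrm{Ad}(\mathbf u_{j}^{-1})\mathfrak n^{+}$, an intersection of two nilpotent subalgebras of $\mathfrak g$, hence a nilpotent subalgebra. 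Finally the weight sets $\Theta\subset\{\tau'_{1}\}\cup\Theta\subset\cdots\subset\{\tau'_{1},\dots,\tau'_{n-1}\}\cup\Theta=\Phi^{+}$ are nested and exhaust $\Phi^{+}$, so the $\mathfrak n_{j}^{+(n)}$, $j=0,1,\dots,n$, form an increasing filtration of $\mathfrak n^{+}$; the sum appearing in the first sentence of the lemma for the index $j$ is precisely $\mathfrak n_{j-1}^{+(n)}$, which proves both assertions.

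\emph{Main obstacle.} The only delicate point is the bookkeeping in Step 1--2: one must get the formula $\tau'_{i}=r_{n-1}\cdots r_{i+1}\cdot\gamma_{i}$ right and then correctly recognize $\{\tau'_{j+1},\dots,\tau'_{n-1}\}$ as the genuine inversion set $N(u_{j})$ of the reduced element $u_{j}=r_{j+1}\cdots r_{n-1}$ --- note that $u_{j}$ is a substring of $(w_{n-1}')^{-1}$, read in the order $r_{j+1},\dots,r_{n-1}$, \emph{not} of $w_{n-1}'$. Once that identification is in hand, the ``intersection of two subalgebras'' observation closes the argument with no further computation.
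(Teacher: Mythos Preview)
Your proof is correct and takes a genuinely different route from the paper's. The paper proceeds by a direct bracket computation: taking $e_{\alpha}$ in $\mathfrak n^{+}\cap w_{n-1}'\mathfrak n^{+}(w_{n-1}')^{-1}$ and $e'_{i}$ with $i\le j$, it supposes for contradiction that $[e_{\alpha},e'_{i}]$ is a nonzero multiple of some $e'_{k}$ with $k>j$, translates this into the root identity $(w_{n-1}')^{-1}(\alpha)+\tau_{k}=\tau_{i}$, and then applies $w_{i}'$ to both sides to force opposite signs. Your approach instead identifies the entire space $\mathfrak n_{j}^{+(n)}$ in one stroke as $\mathfrak n^{+}\cap\mathrm{Ad}(\mathbf u_{j}^{-1})\mathfrak n^{+}$ for the tail $u_{j}=r_{j+1}\cdots r_{n-1}$ of $(w_{n-1}')^{-1}$, by matching weight sets via the inversion-set formula $N(u_{j})=\{\tau'_{j+1},\dots,\tau'_{n-1}\}$. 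The intersection-of-subalgebras observation then finishes immediately. Your argument is more structural and yields the filtration and the two-summand decomposition simultaneously, while the paper's argument is more hands-on but isolates precisely the bracket relation that is actually used later in the proof of Lemma~\ref{workhorse} (namely that $\mathrm{ad}(e_{\gamma_{n}})^{k}(e'_{j})\in\mathfrak n_{j}^{+(n)}$). Both rest on the same underlying combinatorics of initial and terminal subwords of $w_{n-1}'$.
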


\begin{proof} Consider a root vector $e_{\alpha}\in n^+\cap w_{n-1}'\mathfrak n^+(w_{n-1}')^{-1}$, corresponding to a positive root $\alpha$ such that $(w'_{n-1})^{-1}(\alpha)>0$. We need to show that for $i\le j$, $[e_{\alpha},e_i']\in \mathfrak n_j^{+(n)}$.

We argue by contradiction. Suppose there is $i\le j$ and $k> j$ such that $[e_{\alpha},e'_i]=c\cdot e_k'$, where $c$ is a nonzero constant. In terms of roots this means that $\alpha+\tau_i'=\tau_k'$, or
$$(w_{n-1}')^{-1}(\alpha)-\tau_i=-\tau_k  \text{  or  }   (w_{n-1}')^{-1}(\alpha)+\tau_k=\tau_i$$
Apply $w_{i}'$ to this latter equality of roots. $w_i'(\tau_i)$ is a negative root. $w_i(\tau_k)$ is a positive root because $k>i$. Also $w_{n-1}'(\alpha)$ is a root which is not mapped to a negative root by $w_{n-1}'$, and hence it is not mapped to a negative root by $w_i'$ (the sets of the positive roots mapped to negative roots by the $w_i'$ are nested and increasing). Thus $w_i'$ applied to the RHS is negative and $w_i'$ applied to the LHS is positive, a contradiction. This implies the lemma.

\end{proof}

Modulo $N^+\cap w_{n-1}'N^+(w_{n-1}')^{-1}$ on the right, $(\ref{product4})$ equals
\begin{equation}\label{product5}\prod_{1\le j<n}^{\leftarrow}\exp(\zeta_n^+e_{\gamma_n})\exp(l_j^{(n-1)}e'_j)\exp(-\zeta_n^+e_{\gamma_n}) \end{equation}
\begin{equation}\label{product6}=\prod_{1\le j<n}^{\leftarrow}\exp(l_j^{(n-1)}e'_j+\sum_{k>0}\frac1{k!}l^{(n-1)}_j(\zeta_n^+ ad(e_{\gamma_n})^k(e'_j)) \end{equation}

\begin{remark}\label{simplylacedremark}For later reference, note that if $\mathfrak g$ is simply laced (so the nondiagonal entries of the Cartan matrix are $-1$ or $0$), then the terms in the sum vanish for $k>1$, and with the exception of $G_2$, the terms vanish for $k>2$ (for $G_2$ the terms vanish for $k>3$).\end{remark}

Because $e_{\gamma}\in \mathfrak n^+\cap w_{n-1}'\mathfrak n^+(w_{n-1}')^{-1}$, Lemma \ref{techrootlemma} implies that
$ad(e_{\gamma_n})^k(e'_j)\in \mathfrak n_j^{+(n)}$. Consequently (\ref{product6}) is of the form
\begin{equation}\label{product7}=\prod_{1\le j<n}^{\leftarrow}\exp(l_j^{(n-1)}e'_j+x_j) \end{equation}
where $x_j$ is in the subalgebra $\mathfrak n_j^{+(n)}$ (The point is that $x_j$ is a combination of the root vectors $e_i'$ which are to the right, plus something in $\mathfrak n^+\cap w_{n-1}'\mathfrak n^+(w_{n-1}')^{-1}$, which we want to move to the right). To do the reordering, we proceed term by term, starting from the left. Consider the left most term $\exp(l_{n-1}^{(n-1)}e'_{n-1}+x_{n-1})$.
By induction $l_{n-1}^{(n-1)}=\zeta^-_{n-1}$. Relative to the decomposition (\ref{groupdecomposition})
$$\exp(l_{n-1}^{(n-1)}e'_{n-1}+x_{n-1})=\left(exp(\zeta^-_{n-1}e'_{n-1})exp(x_{n-1}')\right)exp(x_{n-1}'')$$ where $x_{n-1}'$ is
a combination of the root vectors $e_i'$, $i<n-1$, and $x_{n-1}''\in \mathfrak n^+\cap w_{n-1}'\mathfrak n^+(w_{n-1}')^{-1}$.
This shows that $l^{(n)}_{n-1}=\zeta^-_{n-1}$. Using the fact that $\mathfrak n^{+(n)}_{n-1}$ is a subalgebra, and the induction
hypothesis that $l_{n-2}^{(n-1)}=\zeta^-_{n-2}$, the two left most terms can be written as
$$exp(\zeta^-_{n-1}e'_{n-1})exp(x_{n-1}')exp(x_{n-1}'')\exp(\zeta^-_{n-2}e'_{n-2}+x_{n-2})$$
$$=exp(\zeta^-_{n-1}e'_{n-1})\exp(\mathbf l_{n-2}^{(n-1)}e'_{n-2}+\mathbf x_{n-2})$$
where $\mathbf l_{n-2}^{(n-1)}-\zeta^-_{n-1}$ depends only on $\zeta^{+}_n$ and $l^{(n-1)}_{n-1}=\zeta^-_{n-1}$, and $\mathbf x_{n-2}\in \mathfrak n^{+(n)}_{n-2}$. We can now continue this process. This will modify $l^{(n-1)}_j$ by adding in terms
that involve $\zeta^+_n$ and $l^{(n-1)}_k$ for $k>j$. By the induction hypothesis, $l^{(n-1)}_k-\zeta^-_k$ depends only on
the $\zeta^{\pm}_m$ variables for $m>k$. Since $k>j$, this implies that $l^{(n)}_j-\zeta^-_j$ will only depend on the $\zeta^{\pm}_m$ variables for $m>j$, i.e. we have proven the statements in (b) for $l^{(n)}_j$ and $g^{(n)}$.

Part (a) is a direct consequence of the fact that $\mathfrak g$ is graded by height. For part (d), the simply laced case, the basic fact is that the terms in the sum (in (\ref{product6})) will vanish for $k>1$, and the coefficient in the $k=1$ term will be multilinear.

Recall that given a triangular decomposition for $\mathfrak g$, there is a Cartan involution (which we denote by $x\to -x^t$) which is
$-1$ on $\mathfrak h$ and interchanges the $\alpha$ and $-\alpha$ root spaces for a positive root $\alpha$. Given a factorization
$$g=\prod_{1\le j\le \mathbf n}^{\leftarrow}i_{\tau_j}(g(\zeta_j))$$
$$g^t=\prod_{1\le j\le \mathbf n}^{\leftarrow}i_{\tau_{\mathbf n+1-j}}(g(\zeta_{\mathbf n+1-j}^t))$$
where $\zeta^t=(\zeta^-,\zeta^+)^t=(\zeta^+,\zeta^-)$. This is the factorization that corresponds to the reversed ordering of positive roots (the conjugated and reversed reduced
factorization for $w_0$, see (3) of Example \ref{orderingexamples}). This just interchanges the claims about the $\zeta^-$ and the $\zeta^+$ variables.

This completes the proof.

\end{proof}

To partially motivate the formulas in the following lemma and the main theorem, note that there is a triangular factorization
\begin{equation}\label{transpose}g(\zeta)^{-t}:=(g(\zeta)^{-1})^t=\left(\begin{matrix}1+\zeta^-\zeta^+&-\zeta^-\\-\zeta^+&1\end{matrix}\right)
\end{equation}
$$=
\left(\begin{matrix}1&0\\-\frac{\zeta^+}{1+\zeta^-\zeta^+}&1\end{matrix}\right)\left(\begin{matrix}1+\zeta^-\zeta^+&0\\0&(1+\zeta^-\zeta^+
)^{-1}\end{matrix}\right)\left(\begin{matrix}1&-\frac{\zeta^-}{1+\zeta^-\zeta^+}\\0&1\end{matrix}\right))$$
provided that $1+\zeta^-\zeta^+\ne 0$.

\begin{lemma} \label{transposelemma}As in the preceding lemma, suppose that
$$g=\prod_{1\le j\le \mathbf n}^{\leftarrow}i_{\tau_j}(g(\zeta_j))$$
Then
$$g^{-t}=\left(\prod_{1\le j\le \mathbf n}^{\leftarrow}i_{\tau_{j}}(g(\eta_{j}))\right)\prod_{1\le j\le \mathbf n}
(1+\zeta^-_j\zeta^+_j)^{h_{\tau_j}}$$
where $\eta=(\eta^-,\eta^+)\in (\mathbb C^2)^{\mathbf n}$ and
$$ \eta^-_k=-\zeta^+_k\prod_{k<j\le \mathbf n} (1+\zeta^-_j\zeta^+_j)^{-\tau_k(h_{\tau_j})}  \text{ and } \eta^+_k=-\zeta^-_k\prod_{k<j\le \mathbf n} (1+\zeta^-_j\zeta^+_j)^{-\tau_k(h_{\tau_j})}$$
\end{lemma}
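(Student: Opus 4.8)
The starting observation is that although the transpose $g\mapsto g^{t}$ is an anti-automorphism, the Cartan involution $\Theta\colon g\mapsto g^{-t}$ is a genuine group \emph{automorphism} of $G$, since $(g_1g_2)^{-t}=g_1^{-t}g_2^{-t}$. Consequently $\Theta$ distributes over the ordered product defining $g$ \emph{without reversing the order}:
$$g^{-t}=\prod_{1\le j\le \mathbf n}^{\leftarrow}\Theta\bigl(i_{\tau_j}(g(\zeta_j))\bigr).$$
So the problem splits into (i) evaluating $\Theta$ on a single root--homomorphism factor, and (ii) rearranging the resulting product of lower--unipotent, torus, and upper--unipotent pieces into the asserted normal form.

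For (i): since $\iota_{\gamma}$ carries the standard basis of $\mathfrak{sl}(2,\mathbb C)$ to $f_{\gamma},h_{\gamma},e_{\gamma}$ with $\tau(e_{\gamma})=-f_{\gamma}$, the homomorphism $\iota_{\gamma}$ intertwines the involution $A\mapsto A^{-t}$ on $\mathrm{SL}(2,\mathbb C)$ with $\Theta$; moreover the defining relation (\ref{defn_of_r_gamma}) for $\mathbf r_{\gamma}$, together with $\mathbf r_{\gamma}^2=\iota_{\gamma}(-I)\in T$, gives $\Theta(\mathbf r_{\gamma})=\mathbf r_{\gamma}^{-1}$. Hence $\Theta$ sends $\mathbf w_{j-1}'=\mathbf r_{j-1}\cdots \mathbf r_1$ to a representative of $(w_{j-1}')^{-1}$ differing from $(\mathbf w_{j-1}')^{-1}$ only by an explicit order--two element $t_j\in T$, so that $\Theta(i_{\tau_j}(A))=t_j\, i_{\tau_j}(A^{-t})\, t_j^{-1}$. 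Feeding $A=g(\zeta_j)$ into this and using the explicit triangular factorization (\ref{transpose}) of $g(\zeta_j)^{-t}$ exhibits $\Theta(i_{\tau_j}(g(\zeta_j)))$ as an element of $\exp(\mathbb C f_{\tau_j})\cdot(1+\zeta_j^-\zeta_j^+)^{h_{\tau_j}}\cdot\exp(\mathbb C e_{\tau_j})$ whose two scalar entries are read directly off (\ref{transpose}) (up to the sign $\tau_j(t_j)=\pm1$).

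For (ii): I would push all $\mathbf n$ torus factors $(1+\zeta_j^-\zeta_j^+)^{h_{\tau_j}}$ to the far right. Each time such a factor is moved past a root--homomorphism piece attached to $\tau_k$, conjugation rescales the corresponding root subgroup coordinate by a power of $(1+\zeta_j^-\zeta_j^+)$ determined by $\tau_k(h_{\tau_j})$; since the $j$-th factor stands to the left of the $k$-th exactly when $k<j$, only the coordinates with index $<j$ are affected, which is the origin of the products $\prod_{k<j\le\mathbf n}(1+\zeta_j^-\zeta_j^+)^{-\tau_k(h_{\tau_j})}$ in the formulas for $\eta_k^{\pm}$. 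Reassembling, for each $k$, the surviving lower-- and upper--unipotent pieces into a single $i_{\tau_k}(g(\eta_k))$, and collecting the mutually commuting torus factors, then yields $g^{-t}=\bigl(\prod^{\leftarrow}_{k}i_{\tau_k}(g(\eta_k))\bigr)\prod_{j}(1+\zeta_j^-\zeta_j^+)^{h_{\tau_j}}$ with the $\eta_k$ as stated. It is cleanest to organize this rearrangement as an induction on $\mathbf n$, peeling off the leftmost factor $i_{\tau_{\mathbf n}}(g(\zeta_{\mathbf n}))$ and pushing its torus element through the expression supplied by the inductive hypothesis for the truncated product $g^{(\mathbf n-1)}$ from the proof of Lemma \ref{workhorse}; the index $k=\mathbf n$ is exactly the one whose correction product is empty.

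The main obstacle is the bookkeeping in step (ii): keeping track of which torus factor conjugates which root subgroup, pinning down the exponents and the range $k<j\le\mathbf n$, and checking both that the torus part collapses to precisely $\prod_j(1+\zeta_j^-\zeta_j^+)^{h_{\tau_j}}$ and that the $(1+\zeta_j^-\zeta_j^+)$--denominators produced by the single--factor identity (\ref{transpose}) reorganize into the asserted closed forms for $\eta_k^{\pm}$. The Weyl--representative twists $t_j$ of step (i) are a secondary technicality: one checks that the signs $\tau_j(t_j)$ either cancel in the final product or can be absorbed into the normalization of the $e_{\tau_j},f_{\tau_j}$, so that no signs survive in the answer.
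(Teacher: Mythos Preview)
Your approach is essentially the same as the paper's, though you have expanded considerably on what the paper dispatches in a single sentence: ``This follows by applying (\ref{transpose}) to each of the factors in the factorization of $g$, then moving the $H$ factors to the right.'' Your observation that $g\mapsto g^{-t}$ is an honest automorphism (so the ordered product is preserved rather than reversed) is exactly the mechanism implicit in that sentence, and your description of step~(ii)---pushing each torus factor $(1+\zeta_j^-\zeta_j^+)^{h_{\tau_j}}$ to the right and picking up the conjugation factor $(1+\zeta_j^-\zeta_j^+)^{-\tau_k(h_{\tau_j})}$ on each $i_{\tau_k}$ with $k<j$---is precisely what ``moving the $H$ factors to the right'' means in practice.

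Your discussion of the possible order--two twists $t_j\in T$ coming from $\Theta(\mathbf w_{j-1}')\ne(\mathbf w_{j-1}')^{-1}$ is a level of care the paper does not engage with at all; the paper simply treats $i_{\tau_j}$ as intertwining $A\mapsto A^{-t}$ with $\Theta$ without comment. Your instinct that any resulting signs are absorbed into the normalization of $e_{\tau_j},f_{\tau_j}$ (cf.\ Remark~\ref{roothomomorphisms}, where the paper already acknowledges these root vectors depend on the choice of representative) is the right way to reconcile this, and is consistent with how Example~\ref{A2} presents the $i_{\tau_j}$ in ``obvious'' form rather than tracking the conjugation by $\mathbf w_{j-1}'$ literally. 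So this is not a gap in your argument; if anything you are being more scrupulous than the source.
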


\begin{proof}This follows by applying (\ref{transpose}) to each of the factors in the factorization of $g$, then moving the $H$ factors to the right.\end{proof}

Before continuing to the main theorem, it is enlightening to first discuss a naive algorithm for solving for the $\zeta$ variables, based on Lemma \ref{workhorse}. By part (b) of Lemma \ref{workhorse}, $\zeta_{\mathbf n}^-=l_{\mathbf n}$,  $\zeta_{\mathbf n-1}^-=l_{\mathbf n-1}$, and in general, using downward induction, $\zeta_k^-=l_k$ plus a polynomial in the variables $l_j$ and $\zeta_j^{+}$ for $j>k$. We substitute these expressions for the $\zeta^-$ variables into the original polynomial expressions of the $u_k$ in terms of $\zeta$, so that $u_k$ is a polynomial in the variables $l_j$ and $\zeta_j^{+}$. By part (c)  $u_1=\zeta_1^+, u_2=\zeta_2^+$, and in general $u_k=\zeta_k^+$ plus a polynomial in the $l$ variables and $\zeta_j^{\pm}$ for $j<k$; the crux of the matter is that this polynomial can depend on $\zeta_k^+$. For example, in Example \ref{A3}, $u_3=\zeta_3^+-\zeta_1^-u_2$, and $\zeta_1^-$ does depend (linearly) on $\zeta_3^+$. For $u_3$, it is easy to see that the dependence on $\zeta_3^+$ is linear, and hence one can solve for $\zeta_3^+$ as a rational function of the $u$, $l$, and $\zeta_j^+$ variables for $j>3$. We now substitute these rational expressions into the $u_k$ for $k\ge 4$. Now $u_4=\zeta_4^+$ plus a rational expression, and in the process of clearly denominators, it is not a priori clear that the resulting polynomial equation depends linearly on $\zeta_4^+$. In experiments this always (miraculously) work out, but we are not able to justify this. Thus from lemma \ref{workhorse} we can see clearly that the $\zeta$ variables are algebraic functions of $l,u$, but it is not clear that they are rational.

\begin{theorem}\label{rationalinverse2} Consider the forward map
$$ F:\mathbb (C^{2})^{\mathbf n} \to N^-N^+:\zeta \to g=\iota_{\tau_{\mathbf n}}(g(\zeta_{\mathbf n}))..\iota_{\tau_1}(g(\zeta_1)) h$$

(a) The map $F$ has a rational inverse, i.e. the $\zeta$ variables are rational functions of
the ordered exponential coordinates.

(b) The map $F$ induces a bijective correspondence between $\{\zeta:det(\partial F)\ne 0\}$ and the domain of the rational inverse in part (a).
\end{theorem}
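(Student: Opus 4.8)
The plan is to produce the rational inverse explicitly by an algorithm that rests on Lemma \ref{transposelemma}, and then to read off part (b) from the structure of that algorithm. Since $H$ merely rescales on the right and $ma(g)=h$ is recovered directly from the Birkhoff factorization, I would reduce immediately to $h=1$ and work with $g=lu\in N^-N^+$. The key observation is the one flagged in the introduction: Lemma \ref{transposelemma} says that $g^{-t}$ again has a root subgroup factorization, with root subgroup parameters $\eta_k$ obtained from the $\zeta_k$ by the triangular, invertible substitution $\eta_k^- = -\zeta_k^+\prod_{j>k}(1+\zeta_j^-\zeta_j^+)^{-\tau_k(h_{\tau_j})}$ and $\eta_k^+ = -\zeta_k^-\prod_{j>k}(1+\zeta_j^-\zeta_j^+)^{-\tau_k(h_{\tau_j})}$, together with an explicit diagonal factor. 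The forward map applied to $\eta$ is itself polynomial by Proposition \ref{forwardmap} / Lemma \ref{workhorse}, so this gives a second, "dual" system of polynomial equations relating $(\eta^-,\eta^+)$ to the ordered exponential coordinates $(\tilde l,\tilde u)$ of $g^{-t}$, which are themselves rational functions of $(l,u)$ via the uniqueness and rationality of LDU factorization (Subsection \ref{concludingcomment}).

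The heart of the argument is to combine the two systems. From Lemma \ref{workhorse}(c), in the $(\zeta\to(l,u))$ system one has $u_1=\zeta_1^+$, $u_2=\zeta_2^+$, and more generally $u_k - \zeta_k^+$ depends only on $\zeta_m^\pm$ for $m<k$ and on the $l$-variables (after substituting $\zeta_k^-=l_k + (\text{polynomial in }l_j,\zeta_j^+,\ j>k)$ from part (b)). Symmetrically, from Lemma \ref{workhorse}(b) applied to $g^{-t}$ — equivalently, using the reversed-ordering/Cartan-involution symmetry recorded in the proof of Lemma \ref{workhorse} — $\eta_{\mathbf n}^- = \tilde l_{\mathbf n}$, $\eta_{\mathbf n-1}^- = \tilde l_{\mathbf n-1}$, and $\eta_k^-$ depends triangularly on the $\eta_j^\pm$ for $j>k$ and on $\tilde l,\tilde u$. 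Now I would induct. Suppose, downward in $k$, that $\zeta_j^-$ and $\zeta_j^+$ have been expressed as rational functions of $(l,u)$ for all $j>k$. Then the products $\prod_{j>k}(1+\zeta_j^-\zeta_j^+)^{*}$ are known rational functions, so the relation $\eta_k^+ = -\zeta_k^-\prod_{j>k}(\cdots)$ from Lemma \ref{transposelemma} expresses $\zeta_k^-$ linearly (with known rational coefficient) in terms of $\eta_k^+$, and likewise $\zeta_k^+$ linearly in terms of $\eta_k^-$. On the other hand $\eta_k^-$ is, by the dual version of Lemma \ref{workhorse}(b)–(c), equal to an already-known rational function of $(l,u)$ plus a term linear in $\zeta_k^-$ (through the coupling $\eta_k^+ \leftrightarrow \zeta_k^-$). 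One thus gets, at each stage, a pair of \emph{linear} equations in the two unknowns $\zeta_k^-,\zeta_k^+$ with coefficients that are rational in $(l,u)$ and in the previously solved variables; solving this $2\times2$ linear system yields $\zeta_k^\pm$ as rational functions of $(l,u)$, completing the induction and proving (a). The denominators that appear are exactly the products $\prod_{j>k}(1+\zeta_j^-\zeta_j^+)$ together with the determinants of these $2\times 2$ systems.

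For part (b), I would argue that the domain of the rational inverse just constructed coincides with $\{(l,u): F \text{ is locally bijective on the fiber}\}$, using the determinant formula $\det(\partial F)=\prod_k(1+\zeta_k^-\zeta_k^+)^{\delta(h_{\tau_k})-1}$ of Theorem \ref{detjacobian}. Since $\delta(h_{\tau_k})\ge 1$ for every positive root $\tau_k$, with equality iff $\tau_k$ is simple, $\det(\partial F)\ne 0$ is equivalent to $1+\zeta_k^-\zeta_k^+\ne 0$ for every non-simple $\tau_k$; but along the induction above the only obstructions to solvability were precisely the non-vanishing of the factors $1+\zeta_j^-\zeta_j^+$ (those with $j$ non-simple survive into the denominators) and of the $2\times2$ determinants, and one checks the latter are, up to units, products of the former. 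Conversely, on the locus where all these denominators are non-zero the rational inverse is a genuine two-sided inverse on that open set by construction, so $F$ is injective there; and any $\zeta$ with $\det(\partial F)\ne 0$ maps into this locus. This gives the claimed bijective correspondence. I expect the main obstacle to be the bookkeeping in the inductive step — verifying that the coupling between the two polynomial systems really does close up into a linear $2\times2$ system at each stage, rather than something of higher degree in $\zeta_k^\pm$ (this is exactly the step where the naive one-system algorithm of the paragraph before the theorem fails), and tracking precisely which factors $1+\zeta_j^-\zeta_j^+$ enter the denominators so as to match them against $\det(\partial F)$ for part (b).
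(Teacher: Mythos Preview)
Your strategy is the paper's: pass to $g^{-t}$ via Lemma~\ref{transposelemma}, observe that $g^{-t}h^{-1}=F(\eta)$ so that Lemma~\ref{workhorse} applies a second time to the ordered exponential coordinates $l',u'$ of $g^{-t}h^{-1}$ (which are rational in $l,u$ because LDU factorization is rational), and run a downward induction on $k$.

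Where you diverge from the paper is in the inductive step, and the divergence is a confusion rather than a genuine alternative. You set up a coupled $2\times 2$ linear system in $(\zeta_k^-,\zeta_k^+)$ and flag its solvability as ``the main obstacle''. But no coupling occurs. At stage $k$ the paper uses only part~(b) of Lemma~\ref{workhorse}, applied twice: for $g$ it gives $\zeta_k^-=l_k+p(\zeta_j^\pm:j>k)$, which by the induction hypothesis determines $\zeta_k^-$ outright; for $F(\eta)=g^{-t}h^{-1}$ it gives $\eta_k^-=l_k'+p'(\eta_j^\pm:j>k)$, and since each $\eta_j^\pm$ with $j>k$ is (via Lemma~\ref{transposelemma}) a known rational expression in the $\zeta_i^\pm$ with $i\ge j>k$, this determines $\eta_k^-$ outright, whence $\zeta_k^+=-\eta_k^-\prod_{j>k}(1+\zeta_j^-\zeta_j^+)^{\tau_k(h_{\tau_j})}$. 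Your assertion that ``$\eta_k^-$ is \ldots\ an already-known rational function plus a term linear in $\zeta_k^-$'' is simply false: $p'$ involves only $\eta_j^\pm$ with $j>k$, and those depend only on $\zeta_j^\pm$ with $j>k$. Part~(c) of Lemma~\ref{workhorse} (the $u$-side, which runs upward) is never invoked, so the problematic cross-dependence you are guarding against does not enter. Correspondingly there are no $2\times 2$ determinants to track; the only denominators the algorithm produces are the products $\prod_{j>k}(1+\zeta_j^-\zeta_j^+)^{\tau_k(h_{\tau_j})}$, which assemble directly into $\det(\partial F)$. For part~(b) the paper then argues the two containments: the algorithm shows $\{\det(\partial F)\ne 0\}\subset U:=F^{-1}(D)$, while local injectivity of the holomorphic map $F$ on $U$ forces $U\subset\{\det(\partial F)\ne 0\}$.
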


\begin{proof} The proof involves examining a second algorithm which involves solving for the pairs
$\zeta_k^{\pm}$ in reverse order.

We will initially suppose that $1+\zeta_k^-\zeta_k^+\ne 0$ for all $k$ (We will later sharpen this). Let $h=\prod_{1\le j\le \mathbf n}
(1+\zeta^-_j\zeta^+_j)^{h_{\tau_j}}$. By Lemma \ref{transposelemma}, $g^{-t}h^{-1}=F(\eta)$, and hence by Proposition \ref{forwardmap}
has a unique triangular factorization, $g^{-t}h^{-1}=l'u'$. The ordered exponential coordinates (which we denote by $l'_j,u'_j$) are rational functions of $l,u$, because we can obtain the triangular factorization in the following way. We first factor
$g^{-t}=l^{-t}u^{-t}=l''d''u''$. These factors are rational functions of the $l_j,u_j$. Then $h=d''$, $l'=l''$, and $u'=hu''h^{-1}$.
Hence the ordered exponential coordinates for $l',u'$ are rational functions of the ordered exponential coordinates for $l,u$.

We now essentially repeat the algorithm discussed above, except now we use the exponential coordinates for both $g$ and $g^{-t}$.
At the first step $\zeta_{\mathbf n}^-=l_{\mathbf n}$ and $\zeta_{\mathbf n}^+=-\eta_{\mathbf n}^-=-l_{\mathbf n}'$. At the second step
$\zeta_{\mathbf n-1}^-=l_{\mathbf n-1}$ and $\eta_{\mathbf n-1}^-=l_{\mathbf n-1}'$. By Lemma \ref{transposelemma} $\eta_{\mathbf n-1}^-=-\zeta_{\mathbf n-1}^+((1+\zeta^-_{\mathbf n}\zeta^+_{\mathbf n})^{-\tau_{\mathbf n-1}(h_{\tau_{\mathbf n}})})$. Thus  $\zeta^{\pm}_{mathbf n-1}$ are rational functions of ordered exponential coordinates.

We now continue using downward induction. Suppose that for $ k<j$, $\zeta_{j}^{\pm}$ are rational functions of ordered exponential coordinates. Lemma \ref{workhorse}, and Lemma \ref{transposelemma} for the second equation, imply that
$$\zeta_k^-=l_k+p \text{ and }-\zeta^+_k\prod_{k<j\le \mathbf n} (1+\zeta^-_j\zeta^+_j)^{-\tau_k(h_{\tau_j})}=\eta_k^-=l_k'+ p' $$
where $p$ ($p'$) is a polynomial in $\zeta_j^{\pm}$ ($\eta_j^{\pm}$, respectively) for $k<j\le \mathbf n$. By the induction hypothesis
$\zeta_{k}^{\pm}$ are rational functions of ordered exponential coordinates. This completes the proof of part (a).

It follows from part (a) that the forward map $F$ induces a bijective correspondence between $U$, the $F$ inverse image of the domain of its rational inverse, and the domain $D\subset N^-\times N^+$ of its rational inverse. From the algorithm for the $\zeta$ variables, we see that $U$ contains the nonvanishing set of
$$det(\partial F)=\prod_{1\le k< j\le \mathbf n}(1+\zeta^-_j\zeta^+_j)^{\tau_k(h_{\tau_j})}$$
$U$ must be contained $\{det(\partial F)\ne 0\}$, because local injectivity implies nonvanishing of the determinant.
This implies part (b).

\end{proof}

\section{Haar Measure in Root Subgroup Coordinates}\label{haar}

Let $d\lambda_G$, $d\lambda_{N^{\pm}}$, and $d\lambda_H$ denote Haar measures for $G$, $N^{\pm}$ and $H$, respectively. These measures are biinvariant and essentially unique.
In triangular coordinates
$$d\lambda_G(g)=a^{4\delta}  d\lambda_{N^-}(l)d\lambda_{H}(h)d\lambda_{N^+}(u) $$
where $g=lhu$, $h=ma$ relative to $H=T \times A$, and $\delta$ is half the sum of the positive complex roots; see e.g.
Proposition 5.21 of \cite{Helgason}.
For example for $SL(2,\mathbb C)$
$$g=\left(\begin{matrix}1&0\\l_1&1 \end{matrix}\right)\left(\begin{matrix}m_0a_0&0\\0&(m_0a_0)^{-1} \end{matrix}\right)\left(\begin{matrix} 1&u_1\\0&1\end{matrix}\right) $$
$$d\lambda_G(g)=a_0^{4}  d\lambda(l_1)d\lambda(m_0a_0)d\lambda(u_1) $$

\begin{remarks} (a) As a reminder of why this is the correct formula, suppose that we multiply $g=lmau$ on the right by $a_1\in A$.
Then $ga_1=lm(aa_1)u^{a_1^{-1}}$ and
$$d\lambda_G(ga_1)=(aa_1)^{4\delta}  d\lambda_{N^-}(l)d\lambda_{H}(ha_1)d\lambda_{N^+}(u^{a_1^{-1}})  =d\lambda(g)$$

(b) If we factor $g=luh$ or $g=hlu$, then
$$d\lambda_G(g)= d\lambda_{N^-}(l)d\lambda_{N^+}(u)d\lambda_{H}(h) $$
i.e. the density disappears.
\end{remarks}

Choose a reduced factorization of the longest Weyl group element,
$$w_0=r_{\mathbf n}...r_1$$
and consider the associated root subgroup coordinates for a Zariski open subset of $\Sigma_1$ and of $G$,
$$ H\times (\mathbb C^{2})^{\mathbf n} \to G:(h;\zeta_1,...,\zeta_{\mathbf n})\to g=i_{\tau_{\mathbf n}}(g(\zeta_{\mathbf n}))...i_{\tau_1}(g(\zeta_1))h$$

\begin{theorem}In terms of root subgroup coordinates a Haar measure for $G$ is
$$d\lambda_G(g)=\prod_{1\le j\le \mathbf n}|1+\zeta^-_j\zeta^+_j|^{2(\delta(h_{\tau_j})-1)} d\lambda(\zeta)d\lambda_H(h)  $$
where $d\lambda(\zeta)$ is Lebesgue measure, $d\lambda_H(h)$ is Haar measure for $H$, $\delta$ is half the sum of the positive roots,
and $h_{\tau}$ is the coroot corresponding to a positive root $\tau$.
In simply laced cases
$$d\lambda_G(g)=\prod_{1\le j\le \mathbf n}|1+\zeta^-_j\zeta^+_j|^{2(ht(\tau_j)-1)} d\lambda(\zeta)d\lambda_H(h)  $$

\end{theorem}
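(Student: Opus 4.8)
The plan is to deduce the formula from the transformation rule for Haar measure under triangular factorization, combined with the Jacobian computation that is already in hand. First I would invoke the second Remark of this section: if we write $g=(lu)\,h$, where $lu=F(\zeta)\in N^-N^+$ is the forward map and $h\in H$ is the torus variable, then the factorization of $g$ has no density, i.e.
$$d\lambda_G(g)=d\lambda_{N^-}(l)\,d\lambda_{N^+}(u)\,d\lambda_H(h).$$
One must check that this is genuinely the $luh$-form to which the Remark applies: since $F(\zeta)\in N^-N^+$ and $H$ normalizes $N^+$, the torus part of $g$ is precisely the given $h$ (this is $ma(g)=h$ from Proposition \ref{forwardmap}), and the $N^\pm$ parts are exactly the ones carrying the ordered exponential coordinates of Lemma \ref{workhorse}.

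Next I would record that, in ordered exponential coordinates $l=\exp(l_{\mathbf n}f_{\tau_{\mathbf n}})\cdots\exp(l_1f_{\tau_1})$, Haar measure on $N^-$ is Lebesgue measure $d\lambda(\vec l)$ up to a positive constant, which we normalize to $1$, and likewise for $N^+$. This is the standard fact that on a simply connected nilpotent Lie group coordinates of the second kind transport Haar measure to Lebesgue measure; it follows, for instance, because the passage from coordinates of the first kind (where the statement is classical) to coordinates of the second kind has unipotent Jacobian relative to the height grading, hence Jacobian determinant $1$.

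The substance is then a change of variables for the holomorphic (indeed polynomial, by Lemma \ref{workhorse}) map $\Phi\colon (\zeta^\pm_j)_j\mapsto(\vec l,\vec u)$. By definition this is exactly the map whose derivative is denoted $\partial F$, so its complex Jacobian determinant is $\det(\partial F)$, which by \eqref{detformula} (equivalently, as computed in the proof of Theorem \ref{rationalinverse2}) equals $\prod_j(1+\zeta^-_j\zeta^+_j)^{\delta(h_{\tau_j})-1}$. For a holomorphic map between complex manifolds of equal dimension the real Jacobian is the squared modulus of the complex one, so
$$d\lambda_{N^-}(l)\,d\lambda_{N^+}(u)=|\det(\partial F)|^2\,d\lambda(\zeta)=\prod_{1\le j\le\mathbf n}|1+\zeta^-_j\zeta^+_j|^{2(\delta(h_{\tau_j})-1)}\,d\lambda(\zeta),$$
and substituting into the first step yields the stated formula. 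For the simply laced case I would use the identity $\delta(h_\alpha)=ht(\alpha)$, valid there because $\delta$ is the sum of the fundamental weights and, in the simply laced case, the coroot $h_\alpha$ has the same coordinates in the basis of simple coroots as $\alpha$ has in the basis of simple roots; this replaces the exponent $\delta(h_{\tau_j})-1$ by $ht(\tau_j)-1$.

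The only real obstacle is the bookkeeping of the first step — confirming that $g=F(\zeta)h$ is precisely the density-free $luh$-factorization with the ordered-exponential $N^\pm$ factors — together with pinning down the normalization constants relating Haar to Lebesgue measure on $N^\pm$; once those are settled, the rest is just a citation of \eqref{detformula} and the elementary holomorphic change-of-variables formula, so the theorem is essentially a corollary of the Jacobian computation.
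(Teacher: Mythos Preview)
Your proposal is correct and follows essentially the same route as the paper: reduce to the density-free $luh$ form (Remark (b)), identify Haar on $N^\pm$ with Lebesgue in ordered exponential coordinates, and then pull back by the holomorphic Jacobian of $F$. The one point worth flagging is your citation of \eqref{detformula}: in the paper's logic that identity is \emph{not} an input but is what Theorem~\ref{detjacobian} establishes. What the algorithm in Theorem~\ref{rationalinverse2} actually yields is the product form $\det(\partial F)=\prod_{1\le k<j\le\mathbf n}(1+\zeta^-_j\zeta^+_j)^{\tau_k(h_{\tau_j})}$, and the paper then invokes the algebraic identity $\delta(h_{\tau_j})-1=\sum_{k=1}^{j-1}\tau_k(h_{\tau_j})$ (Lemma~\ref{alglemma}, quoted from \cite{CP}) to collapse the exponent. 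So to avoid circularity you should cite the product form plus Lemma~\ref{alglemma} rather than \eqref{detformula} itself; with that adjustment your argument matches the paper's exactly.
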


In terms of root subgroup coordinates, $ma(g)=h$ and does not depend at all on $\zeta$. Thus the density $a^{4\delta}$ is unity. Also Haar measure for $N^-$ ($ N^+$) is Lebesgue measure in terms of the ordered exponential coordinates $l_j$ ($u_j$, respectively) of the previous section. Consequently the problem reduces to understanding the pullback of Lebesgue measure $\prod_{1\le j\le \mathbf n} d\lambda(l_j)d\lambda(u_j)$ to root subgroup coordinates. Thus the theorem can be restated in the following way (or as in the introduction, see (\ref{detformula})).

\begin{theorem}\label{detjacobian} In terms of root subgroup coordinates the holomorphic volume form $dl_{\mathbf n} \wedge ...\wedge dl_1\wedge du_{\mathbf n} \wedge ... \wedge du_1$
equals
$$\prod_{1\le j\le \mathbf n}(1+\zeta^-_j\zeta^+_j)^{\delta(h_{\tau_j})-1} d\zeta^-_{\mathbf n}\wedge ...\wedge d\zeta^-_1 \wedge d\zeta^+_{\mathbf n}\wedge ...\wedge d\zeta^+_1$$
\end{theorem}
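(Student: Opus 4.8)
The theorem asserts that the forward map $F\colon\zeta\mapsto(\vec l,\vec u)$ of the previous sections, written in ordered exponential coordinates, has Jacobian determinant
\[
\det(\partial F)=\prod_{j=1}^{\mathbf n}\bigl(1+\zeta^-_j\zeta^+_j\bigr)^{\delta(h_{\tau_j})-1};
\]
indeed, $F$ being holomorphic, it pulls the form $dl_{\mathbf n}\wedge\cdots\wedge dl_1\wedge du_{\mathbf n}\wedge\cdots\wedge du_1$ back to $\det(\partial F)\, d\zeta^-_{\mathbf n}\wedge\cdots\wedge d\zeta^-_1\wedge d\zeta^+_{\mathbf n}\wedge\cdots\wedge d\zeta^+_1$. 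The proof of Theorem \ref{rationalinverse2} already supplies
\[
\det(\partial F)=\prod_{1\le k<j\le\mathbf n}\bigl(1+\zeta^-_j\zeta^+_j\bigr)^{\tau_k(h_{\tau_j})}
=\prod_{j=1}^{\mathbf n}\bigl(1+\zeta^-_j\zeta^+_j\bigr)^{\sum_{k<j}\tau_k(h_{\tau_j})},
\]
so the entire content of the theorem reduces to the combinatorial identity, for each $j$,
\[
\sum_{k=1}^{j-1}\tau_k(h_{\tau_j})=\delta(h_{\tau_j})-1 .
\]

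To prove this identity I would invoke the standard Weyl group formula $\delta-w^{-1}\cdot\delta=\sum_{\alpha>0,\ w\cdot\alpha<0}\alpha$, valid for every $w\in W$ (it follows by induction on length from the case $w=\mathbf r_\gamma$, which reads $\delta-\mathbf r_\gamma\cdot\delta=\gamma$ since $\delta(h_\gamma)=1$ for $\gamma$ simple). Apply it to $w=w_{j-1}'=\mathbf r_{j-1}\cdots\mathbf r_1$. By Lemma \ref{weylfactorization} the positive roots mapped to negative roots by $w_{j-1}'$ are exactly $\tau_1,\dots,\tau_{j-1}$, so $\delta-(w_{j-1}')^{-1}\cdot\delta=\tau_1+\cdots+\tau_{j-1}$. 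Evaluating on the coroot $h_{\tau_j}$ gives $\sum_{k<j}\tau_k(h_{\tau_j})=\delta(h_{\tau_j})-\delta\bigl(h_{w_{j-1}'\cdot\tau_j}\bigr)$; and since $\tau_j=(w_{j-1}')^{-1}\cdot\gamma_j$ we have $w_{j-1}'\cdot\tau_j=\gamma_j$, a simple root, so $\delta\bigl(h_{w_{j-1}'\cdot\tau_j}\bigr)=\delta(h_{\gamma_j})=1$. This is the identity, and hence $\det(\partial F)=\prod_j(1+\zeta^-_j\zeta^+_j)^{\delta(h_{\tau_j})-1}$, proving the theorem; in the simply laced case $\delta(h_{\tau_j})=\operatorname{ht}(\tau_j)$, which recovers the second displayed formula in the statement preceding Theorem \ref{detjacobian}.

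The only point requiring care is the input fact $\det(\partial F)=\prod_{k<j}(1+\zeta^-_j\zeta^+_j)^{\tau_k(h_{\tau_j})}$. One extracts it from the $g^{-t}$ inverse algorithm of Theorem \ref{rationalinverse2}, which writes $F^{-1}$ as a composite: first $(\vec l,\vec u)\mapsto(\vec l,\vec l{}')$, where $\vec l{}'$ are the ordered exponential coordinates of the $N^-$ factor of the LDU factorization of $g^{-t}$; then $(\vec l,\vec l{}')\mapsto\vec\zeta$, solved one pair at a time for $k=\mathbf n,\mathbf n-1,\dots,1$, whose Jacobian (grouped into $2\times2$ blocks ordered by decreasing $k$) is block lower triangular with $k$-th diagonal block $\operatorname{diag}\!\bigl(1,\,-\prod_{k<j}(1+\zeta^-_j\zeta^+_j)^{\tau_k(h_{\tau_j})}\bigr)$ by Lemmas \ref{workhorse} and \ref{transposelemma}. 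The step I expect to be the genuine obstacle is controlling the Jacobian of the first stage, i.e. of the LDU factorization $g^{-t}\mapsto\vec l{}'$: one must show it contributes exactly the compensating factor so that, after composing and inverting, one recovers $\prod_{k<j}(1+\zeta^-_j\zeta^+_j)^{\tau_k(h_{\tau_j})}$. I would attempt this by re-running the block-triangular analysis of Lemma \ref{workhorse} on $F(\eta)$ and combining with the explicit (block lower triangular) Jacobian of the substitution $\zeta\mapsto\eta$ of Lemma \ref{transposelemma}. A less direct alternative, avoiding $g^{-t}$ altogether, is to compute $\det(\partial F)$ head on: with variables ordered $(\zeta^-_{\mathbf n},\dots,\zeta^-_1,\zeta^+_1,\dots,\zeta^+_{\mathbf n})$ and outputs $(l_{\mathbf n},\dots,l_1,u_1,\dots,u_{\mathbf n})$, Lemma \ref{workhorse}(b),(c) forces $\partial F=\begin{pmatrix}L_1&B_1\\ B_2&L_2\end{pmatrix}$ with $L_1,L_2$ unitriangular, so $\det(\partial F)=\det(L_2-B_2L_1^{-1}B_1)$; identifying this Schur complement with the target product is then the crux, and it is precisely the point at which the paper's naive algorithm stalls. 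By contrast the combinatorial identity of the second paragraph is entirely routine, so the real weight of the proof sits in the determinant formula for $F$.
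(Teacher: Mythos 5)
Your proposal is correct and follows essentially the same route as the paper: quote the Jacobian formula $\det(\partial F)=\prod_{1\le k<j\le\mathbf n}(1+\zeta^-_j\zeta^+_j)^{\tau_k(h_{\tau_j})}$ from the proof of Theorem \ref{rationalinverse2}, and then reduce the stated exponents to the identity $\sum_{k=1}^{j-1}\tau_k(h_{\tau_j})=\delta(h_{\tau_j})-1$. The only real divergence is that the paper imports that identity from Lemma 3.3 of \cite{CP} (quoted as Lemma \ref{alglemma}), whereas you prove it directly from $\delta-(w'_{j-1})^{-1}\cdot\delta=\tau_1+\cdots+\tau_{j-1}$ (the flipped roots being exactly $\tau_1,\dots,\tau_{j-1}$ by Lemma \ref{weylfactorization}) together with $w'_{j-1}\cdot\tau_j=\gamma_j$ simple; that argument is correct and makes the proof self-contained, which is a modest gain over the citation. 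Your closing reservations about how $\det(\partial F)$ is actually extracted from the $g^{-t}$ algorithm --- in particular the unaddressed Jacobian of the stage $(\vec l,\vec u)\mapsto(\vec l,\vec l{}')$ --- are a fair criticism of the terse justification inside the proof of Theorem \ref{rationalinverse2}, but they do not put you at a disadvantage relative to the paper for the present statement: the paper's own proof of Theorem \ref{detjacobian} likewise simply quotes that determinant formula and adds nothing toward its derivation.
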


\begin{proof}The algorithm in the proof of Theorem \ref{rationalinverse2} shows that
$$det(\partial F)=\prod_{1\le k< j\le \mathbf n}(1+\zeta^-_j\zeta^+_j)^{\tau_k(h_{\tau_j})}$$
The formula in the theorem is a consequence of Lemma 3.3 of \cite{CP}, which is the following statement.

\begin{lemma}\label{alglemma} For $j=2,...,\mathbf n$,
$$\delta(h_{\tau_j})-1=\sum_{k=1}^{j-1} \tau_k(h_{\tau_j}).$$
\end{lemma}

\end{proof}

\subsection{A Reformulation}

There is an alternate way to formulate root subgroup coordinates which directly generalizes the unitary case.
Consider the two different versions of root subgroup factorization
\begin{equation}\label{newgintopstratum}
g= \prod_{1\le j\le \mathbf n}^{\leftarrow}\iota_{\tau_{j}}(\left(\begin{matrix}1&0\\\zeta_j^-&1\end{matrix}\right)\left(\begin{matrix}1&\zeta_j^+\\0&1\end{matrix}\right) ) h
\end{equation}
(the version we are considering in this paper)
and
\begin{equation}\label{oldgintopstratum}
g= \prod_{1\le j\le \mathbf n}^{\leftarrow}\iota_{\tau_{j}}(\left(\begin{matrix}1&0\\\eta_j^-&1\end{matrix}\right)\left(\begin{matrix}\mathbf a(\eta_j)&0\\0&\mathbf a(\eta_j)\end{matrix}\right)\left(\begin{matrix}1&\eta_j^+\\0&1\end{matrix}\right)) \tilde h\end{equation}
where $\mathbf a(\zeta):=(1-\zeta^-\zeta^+)^{-1/2}$ involves a choice of square root (the version which directly generalizes the unitary case).
Then
$$\zeta_j^-=\prod_{j<k\le \mathbf n}\mathbf a(\eta_k)^{-\tau_j(h_{\tau_k})} \eta_j^- \text{ and } \zeta_j^+=\prod_{j\le k\le \mathbf n}\mathbf a (\eta_k)^{\tau_j(h_{\tau_k})} \eta_j^+ $$
and
$$h=\prod_{1\le k\le \mathbf n}\mathbf a(\eta_k)^{h_{\tau_k}}\tilde h $$
This implies
$$\zeta_j^-\zeta_j^+=\mathbf a (\eta_j)^{2}\eta_j^-\eta_j^+ \text{ and } 1+\zeta_j^-\zeta_j^+=(1-\eta_j^-\eta_j^+)^{-1}=\mathbf a(\eta_j)^2$$

Now we change variables in the Haar measure formula:
$$d\lambda_G(g)=\prod_{1\le j\le \mathbf n}|1+\zeta^-_j\zeta^+_j|^{2(\delta(h_{\tau_j})-1)} d\lambda(\zeta)d\lambda_H(h)  $$
$$=\prod_{1\le j\le \mathbf n}|1-\eta^-_j\eta^+_j|^{-2(\delta(h_{\tau_j})-1)} \prod_{1\le j\le \mathbf n}\mathbf a(\eta_j)^4 d\lambda(\eta)d\lambda_H(\tilde h)  $$
$$=\prod_{1\le j\le \mathbf n}|1-\eta^-_j\eta^+_j|^{-2\delta(h_{\tau_j}))} d\lambda(\eta)d\lambda_H(\tilde h)= \prod_{1\le j\le \mathbf n}|\mathbf a(\eta_j)|^{4\delta(h_{\tau_j}))} d\lambda(\eta)d\lambda_H(\tilde h) $$

This implies that with respect to the second version of root subgroup coordinates
$$d\lambda(l)d\lambda(u)=d\lambda(\eta)$$

\section{Canonical Orderings in Classical Cases}\label{canonicalorderings}

In this section we identify preferred orderings of positive roots (or reduced factorizations of longest Weyl group elements) in
classical cases by considering inductive limits, e.g. $GL(\infty)=\lim_{n\to \infty} GL(n)$. In the general linear case this is obviously reasonable because the inductive limit is compatible with triangular factorization, in the following sense: If $g$ is an $N\times N$ matrix, and $g^{(n)}$ denotes the principal $n\times n$ minor, then $g=ldu$ implies that $g^{(n)}=l^{(n)}d^{(n)}u^{(n)}$.

In reference to the associated Weyl groups, e.g. $S_{\infty}=\lim_{n\to \infty} S_n$,
the infinite limit does not have an element of longest length.
However it makes sense to seek reduced sequences of simple reflections such that for each $n$, the finite subsequence of appropriate
length corresponds to a reduced factorization for the Weyl group element of longest length for the $n$th subgroup, e.g. $GL(n)$.
In all of the examples that follow, $\mathfrak h$ consists of diagonal matrices (with some symmetry), and
$\mathfrak n^+$ consists of upper triangular matrices. Our conventions are consistent with those in part II of \cite{Pi1}.

In the first four subsections we present what we will refer to as the canonical orderings in the classical cases. In the last subsection
we consider $GL(2\infty)$ and observe that if the rank increases by more than $1$, then uniqueness is lost.

\subsection{$A_{\infty}$: $GL(1,\mathbb C) \subset GL(2,\mathbb C) \subset ...\subset GL(\infty,\mathbb C)$}

Consider the standard ordered basis $\epsilon_1,\epsilon_2,\epsilon_3,...$
for $\mathbb C^{\infty}$ and the associated inclusions of general linear groups
$$GL(1,\mathbb C)\subset GL(2,\mathbb C) \subset ...\subset GL(\infty,\mathbb C)$$
and their Weyl groups
$$S_1 \subset S_2 \subset ...\subset S_{\infty}$$
Also let $\lambda_1,\lambda_2,...$ denote the basis for $\mathfrak h^*$ which dual to the basis $\epsilon_{1}\otimes \epsilon_{1}^*,\epsilon_{2}\otimes \epsilon_{2}^*,...$ for $\mathfrak h$.

We seek a reduced sequence of reflections corresponding to simple roots (i.e. the adjacent transpositions), $r_1,r_2,...$, such that for each $n$,
$$ r_{\frac{n(n-1)}{2}}...r_2r_1$$
is equal to the longest element $w_0^{(n)}\in S_n$, i.e. the permutation
$$w_0^{(n)}=\left(\begin{matrix}1&2&...&n\\n&n-1&...&1\end{matrix}\right)$$

\begin{proposition}There is a unique reduced sequence of reflections corresponding to simple roots which is compatible with the
standard inclusions above. If $s_i$ denotes the transposition of $i$ and $i+1$, then the sequence is given by
$$s_1,s_2,s_1,s_3,s_2,s_1,s_4,s_3,s_2,s_1,... $$
The associated ordering of positive roots is the lexicographic pattern
$$\left(\begin{matrix} \lambda_1& \tau_1&\tau_2&\tau_4&\tau_7&..\\ & \lambda_2 &\tau_3&\tau_5&\tau_8&..\\ & & \lambda_3 & \tau_6&\tau_9&..\\ &  & &\lambda_4&\tau_{10}&..\\ &  & & &\lambda_5&..\\.. & .. & ..& ..&..&..\end{matrix}\right) $$
i.e.
$$\tau_1=\lambda_1-\lambda_2,\quad \tau_2=\lambda_1-\lambda_3,\quad \tau_3=\lambda_2-\lambda_3,\quad \tau_4=\lambda_1-\lambda_4,\quad \tau_5=\lambda_2-\lambda_4,...$$

\end{proposition}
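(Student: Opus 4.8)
The plan is to induct on $n$, exploiting the fact that the prefix $r_1,\dots,r_{\binom{n-1}{2}}$ of the sequence is already completely determined by the stage-$(n-1)$ requirement. Assume inductively that this prefix is the unique reduced sequence with $r_{\binom{n-1}{2}}\cdots r_1=w_0^{(n-1)}$, using only the generators $s_1,\dots,s_{n-2}$. Splitting the length-$\binom{n}{2}$ prefix as (new part)$\,\cdot\,$(old part) $=c_n\cdot w_0^{(n-1)}$, where $c_n:=r_{\binom{n}{2}}\cdots r_{\binom{n-1}{2}+1}$ consists of the $\binom{n}{2}-\binom{n-1}{2}=n-1$ newly adjoined reflections, the stage-$n$ requirement forces
$$c_n=w_0^{(n)}\,(w_0^{(n-1)})^{-1}.$$
First I would compute, in one-line notation, that the right-hand side is the $n$-cycle $(1\,2\,\cdots\,n)$ (it sends $i\mapsto i+1$ for $i<n$, sends $n\mapsto 1$, and fixes every $k>n$), which has length exactly $n-1$. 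Hence $c_n$, being a product of $n-1$ simple reflections equal to an element of length $n-1$, is automatically a reduced word for it; and since $(1\,2\,\cdots\,n)\in S_n=\langle s_1,\dots,s_{n-1}\rangle$, that reduced word can use only $s_1,\dots,s_{n-1}$ (the standard fact that a reduced expression of an element of a standard parabolic subgroup stays inside that subgroup).

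The next step is to pin down that reduced word: I claim $(1\,2\,\cdots\,n)$ has the \emph{unique} reduced expression $s_1s_2\cdots s_{n-1}$. One checks on one-line notation that $s_1s_2\cdots s_{n-1}$ represents this cycle, and it is reduced since it has $n-1$ letters. By the theorem of Matsumoto and Tits, any two reduced expressions are connected by braid moves; but no braid move applies to $s_1s_2\cdots s_{n-1}$, since consecutive letters have indices differing by exactly $1$ (so no commutation move) and every generator occurs once (so there is no $s_is_{i\pm1}s_i$ pattern). Hence $c_n=s_1s_2\cdots s_{n-1}$, i.e. in positions $\binom{n-1}{2}+1,\dots,\binom{n}{2}$ the reflections are $s_{n-1},s_{n-2},\dots,s_1$ in that order; concatenating the blocks over $n=2,3,4,\dots$ produces exactly $s_1,\,s_2,s_1,\,s_3,s_2,s_1,\,s_4,s_3,s_2,s_1,\dots$. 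For existence one runs the induction in the other direction: concatenating the reduced word $s_1\cdots s_{n-1}$ for $(1\,2\,\cdots\,n)$ with the (inductively reduced) word for $w_0^{(n-1)}$ is again reduced, because lengths add, $\binom{n}{2}=(n-1)+\binom{n-1}{2}$, and the product is $(1\,2\,\cdots\,n)\,w_0^{(n-1)}=w_0^{(n)}$.

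For the ordering of roots I would apply Lemma~\ref{weylfactorization} with $w=1$: the associated ordering is $\tau_j=r_1r_2\cdots r_{j-1}\cdot\gamma_j$, where $\gamma_j$ is the simple root of $r_j$. For $j=\binom{n-1}{2}+m$ with $1\le m\le n-1$ (the $m$-th index in the $n$-block), one has $r_j=s_{n-m}$, so $\gamma_j=\lambda_{n-m}-\lambda_{n-m+1}$, and
$$r_1\cdots r_{j-1}=\bigl(r_1\cdots r_{\binom{n-1}{2}}\bigr)\,\bigl(r_{\binom{n-1}{2}+1}\cdots r_{\binom{n-1}{2}+m-1}\bigr)=w_0^{(n-1)}\,s_{n-1}s_{n-2}\cdots s_{n-m+1},$$
using that $r_1\cdots r_{\binom{n-1}{2}}=(r_{\binom{n-1}{2}}\cdots r_1)^{-1}=(w_0^{(n-1)})^{-1}=w_0^{(n-1)}$ is an involution. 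Applying $s_{n-m+1},\dots,s_{n-1}$ in turn (right to left) to $\lambda_{n-m}-\lambda_{n-m+1}$ telescopes to $\lambda_{n-m}-\lambda_n$, and then $w_0^{(n-1)}$ (which exchanges $\lambda_i\leftrightarrow\lambda_{n-i}$ for $i\le n-1$ and fixes $\lambda_n$) sends this to $\lambda_m-\lambda_n$. Thus $\tau_{\binom{n-1}{2}+m}=\lambda_m-\lambda_n$ for $1\le m\le n-1$, which is precisely the displayed lexicographic pattern (the $n$-th column $\lambda_1-\lambda_n,\dots,\lambda_{n-1}-\lambda_n$ of the array, read top to bottom).

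None of the individual steps is deep; the part requiring care will be the bookkeeping --- keeping straight that the new reflections are adjoined on the \emph{left} of the growing product, the repeated use of $w_0^{(n-1)}$ being an involution, and the minimal forms in which the Matsumoto--Tits theorem and the parabolic-support fact are invoked (in particular, one must first observe that $c_n$ can only involve $s_1,\dots,s_{n-1}$ before speaking of ``the unique reduced word of $(1\,2\,\cdots\,n)$'').
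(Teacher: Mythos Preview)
Your argument is correct and follows essentially the same route as the paper: both decompose $w_0^{(n)}=(1\,2\,\cdots\,n)\,w_0^{(n-1)}$, argue that the $n$-cycle has a unique reduced expression because no braid or commutation move applies to $s_1s_2\cdots s_{n-1}$, and induct. Your version is more carefully written---you make explicit the parabolic-support fact (so that $c_n$ a priori lies in $\langle s_1,\dots,s_{n-1}\rangle$), you spell out the existence direction via additivity of lengths, and you actually compute $\tau_{\binom{n-1}{2}+m}=\lambda_m-\lambda_n$ rather than declaring the root ordering ``easily checked''---but the underlying idea is the same.
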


\begin{proof} $S_n$ is the group generated by the $s_i$ and the relations $s_i^2=1$,
$s_is_{i+1}s_i=s_{i+1}s_is_{i+1}$ (the braid relation), and $s_is_j=s_js_i$ if $|i-j|>1$ (locality). It is well-known that one can go from one reduced factorization of a group element $w$ to another using the braiding and locality relations, i.e. the graph with vertices consisting of reduced factorizations of $w$ and with edges given by these relations is connected (this is true in general for groups with a Coxeter system of relations). Using this one can check that the $n$ cycle $(12..n)$ has length $n-1$ and there is a unique reduced factorization
$(12..n)=s_1..s_{n-1}$. Consequently in the proposition we are asserting that $w_0^{(n)}$ is the composition of cycles
$w_0^{(n)}=(12...n)...(123)(12)$.

It is easily checked that the given sequence of simple reflections corresponds to the lexicographic pattern of positive roots (which is a second way of seeing that the sequence is compatible with the inclusions and that the appropriate truncations of the sequence correspond to reduced factorizations of the $w_0^{(n)}$).

For small $n$ uniqueness is easily checked by hand. Suppose that uniqueness holds for $n$ and consider $S_{n+1}$. The question is whether, after the sequence corresponding to $w_0^{(n)}$, the sequence has to continue as $s_{n+1},...,s_1$. Since the indices are adjacent, it is not possible to alter this sequence in any way (Of course one could interchange $s_1$ and $s_{n+1}$, but this would destroy the compatibility with the inclusions). This implies uniqueness, and completes the proof.
\end{proof}

\subsection{$B_{\infty}$: $O(3,\mathbb C)\subset O(5,\mathbb C)\subset ...\subset O(2\infty+1,\mathbb C)$}

We consider the vector space with (doubly infinite) ordered basis
$$...,\epsilon_n,...,\epsilon_0,...,\epsilon_{-n},... $$
with the symmetric form for which $(\epsilon_n,\epsilon_{-n})=1$ and all other pairings vanish.
The Lie algebra $o(2\infty+1,\mathbb C)$ is realized as matrices which are skew-symmetric with respect
to the anti-diagonal. We consider the (double infinite) inclusions
$$O(3,\mathbb C)\subset O(5,\mathbb C) \subset ...$$

A basis for $\mathfrak h$ compatible with this filtration is
$$\epsilon_n\otimes\epsilon_n^*-\epsilon_{-n}\otimes\epsilon_{-n}^*, \qquad n=1,2,3,...$$
Let $\lambda_1,\lambda_2,...$ denote the dual basis. The simple positive roots are
$$\alpha_1=\lambda_1,\quad  \alpha_2=\lambda_2-\lambda_1,\quad \alpha_3=\lambda_3-\lambda_2,... $$
The Weyl group (the group of signed permutations), viewed as linear transformations of $\mathfrak h^*$,  is realized as follows: $s_1(\lambda_1)=-\lambda_1$ and $s_1(\lambda_n)=\lambda_n$, $n>1$, and for $n>1$, $s_n$ transposes $\lambda_n$ and $\lambda_{n-1}$, and fixes the other $\lambda_j$.

\begin{proposition}There is a unique reduced sequence of reflections corresponding to simple roots, $r_1,r_2,...$, such that for each $n$,
$$ r_{\frac{2n(2n-1)}{2}}...r_2r_1$$
is equal to the longest element of the Weyl group of $O(2n+1,\mathbb C)$. The reduced sequence is given by
$$s_1,s_2,s_1,s_2,s_3,s_2,s_1,s_2,s_3,s_4,... $$
The associated ordering of positive roots is lexicographic:
$$\left(\begin{matrix}\lambda_3 & \tau_9&\tau_8 &\tau_7 &\tau_6 &\tau_5 & 0\\   & \lambda_2&\tau_4 &\tau_3 &\tau_2 & 0& \\ & &\lambda_1 &\tau_1 &0 & & \\ & & & 0& & & \\ & &0 & & -\lambda_1& & \\ &0 & & & & -\lambda_2& \\ 0& & & & & & -\lambda_3 \end{matrix} \right)  $$
(we have not indicated the skew reflection across the antidiagonal for the $\tau$s), i.e.
$$\tau_1=\lambda_1,\quad \tau_2=\lambda_1+\lambda_2,\quad \tau_3=\lambda_2,\quad \tau_4=\lambda_2-\lambda_1,...  $$
\end{proposition}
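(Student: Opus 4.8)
The plan is to adapt to type $B$ the argument used for $GL(\infty)$: write the longest element of $W(B_n)$ as a product of one ``block'' per new coordinate, observe that each block has a unique reduced word, and read off both existence and uniqueness of the compatible sequence; the description of the ordering of roots then falls out of the same bookkeeping. Recall that the Weyl group of $O(2n+1,\mathbb C)$ is the group of signed permutations of $\lambda_1,\dots,\lambda_n$, that it has $n^2$ positive roots, and that its longest element $w_0^{(n)}$ acts on $\mathfrak h^*$ as $-1$. Let $\beta_k\in W$ denote the reflection in the root $\lambda_k$, i.e.\ the transformation negating $\lambda_k$ and fixing every other $\lambda_j$. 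Since the $\lambda_j$ are mutually orthogonal the $\beta_k$ commute, and $w_0^{(n)}=\beta_n\beta_{n-1}\cdots\beta_1$. As in the $GL(\infty)$ argument I will also use the standard fact, valid for any Coxeter group, that any two reduced words for a given element are connected by a sequence of braid and commutation moves.

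The first step is to understand the blocks $\beta_k$. The positive roots of $B_k$ not already lying in $B_{k-1}$ are $\lambda_k$ together with $\lambda_k\pm\lambda_i$ for $1\le i<k$ --- exactly $2k-1$ of them --- and $\beta_k$ sends each of these to a negative root while fixing each positive root of $B_{k-1}$; hence $l(\beta_k)=2k-1$. Since $\sum_{k=1}^n(2k-1)=n^2=l(w_0^{(n)})$, the factorization $w_0^{(n)}=\beta_n\cdots\beta_1$ is length-additive, so concatenating reduced words for $\beta_n,\dots,\beta_1$ (in that order) produces a reduced word for $w_0^{(n)}$, and the same holds for every initial segment. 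The crucial point is that $\beta_k$ has a \emph{unique} reduced word, namely the palindrome
$$P_k\ :=\ s_k s_{k-1}\cdots s_2\, s_1\, s_2\cdots s_{k-1}s_k$$
of length $2k-1$: it represents $\beta_k$ because $(s_k\cdots s_2)\,s_1\,(s_k\cdots s_2)^{-1}$ is the reflection in the root $(s_k\cdots s_2)\cdot\lambda_1=\lambda_k$, and, having length $l(\beta_k)$, it is reduced. To see it is the \emph{only} reduced word I would check that no braid or commutation move applies to $P_k$: in type $B$ the only Coxeter exponents are $2,3,4$; consecutive letters of $P_k$ always have indices differing by exactly $1$, which rules out commutation ($m=2$) moves; the only non-monotone length-three substring of $P_k$ is $s_2 s_1 s_2$, which is not an $m=3$ braid since the pair $\{s_1,s_2\}$ has exponent $4$; and an $m=4$ braid would require a length-four substring using two copies of $s_1$, whereas $s_1$ occurs exactly once in $P_k$. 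By the braid/commutation fact recalled above, $P_k$ is therefore the unique reduced word for $\beta_k$.

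Granting this, the proposition follows. For existence, the sequence obtained by concatenating $P_1,P_2,P_3,\dots$, namely $s_1,s_2,s_1,s_2,s_3,s_2,s_1,s_2,s_3,s_4,\dots$, has the property that its truncation of length $n^2$ is $P_1P_2\cdots P_n$, a reduced word whose product is $\beta_n\cdots\beta_1=w_0^{(n)}$, and every initial segment of it is reduced because it is an initial segment of such a $P_1\cdots P_n$. For uniqueness, suppose a compatible sequence $r_1,r_2,\dots$ is forced through position $(k-1)^2$ (the base case $k=1$ being immediate). Then $r_{(k-1)^2}\cdots r_1=w_0^{(k-1)}$ and $r_{k^2}\cdots r_1=w_0^{(k)}$, so $r_{k^2}\cdots r_{(k-1)^2+1}=w_0^{(k)}(w_0^{(k-1)})^{-1}=\beta_k$; this is a word of length exactly $k^2-(k-1)^2=2k-1=l(\beta_k)$, hence reduced, hence equal to $P_k$, so positions $(k-1)^2+1,\dots,k^2$ are forced, completing the induction. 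Finally, the ordering of positive roots is read off from $\tau_j=r_1\cdots r_{j-1}\cdot\gamma_j$: within block $k$ one has $r_1\cdots r_{(k-1)^2}=w_0^{(k-1)}$, which is $-1$ on $\mathrm{span}(\lambda_1,\dots,\lambda_{k-1})$ and the identity on $\lambda_k,\lambda_{k+1},\dots$, and pushing the simple roots $\alpha_k,\alpha_{k-1},\dots,\alpha_1,\alpha_2,\dots,\alpha_k$ attached to $P_k$ through the preceding reflections --- a direct computation --- yields, in order, $\lambda_k+\lambda_{k-1},\lambda_k+\lambda_{k-2},\dots,\lambda_k+\lambda_1,\lambda_k,\lambda_k-\lambda_1,\dots,\lambda_k-\lambda_{k-1}$; stacking these blocks for $k=1,2,3,\dots$ gives precisely the lexicographic pattern displayed in the statement. (Alternatively one can verify directly, via the criterion following Lemma~\ref{weylfactorization}, that this lexicographic ordering arises from a factorization of $w_0$ compatible with the inclusions, and then invoke uniqueness.)

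I expect the main obstacle to be the uniqueness of the reduced word $P_k$ for the block $\beta_k$. In the $GL(\infty)$ case the analogous statement is essentially the remark that adjacency of indices leaves no freedom in $s_1 s_2\cdots s_{k-1}$; here $P_k$ is non-monotone, so one must argue a little more carefully --- in particular it is the single occurrence of $s_1$ that obstructs the length-four braid, and one must be sure that no length-three braid is hiding at the ``valley'' $s_2 s_1 s_2$. A secondary, purely routine point that nevertheless needs checking is the length bookkeeping ($l(\beta_k)=2k-1$, $l(w_0^{(n)})=n^2$, and length-additivity of $\beta_n\cdots\beta_1$), since it is this that legitimizes concatenating reduced words and matching word lengths in the uniqueness induction.
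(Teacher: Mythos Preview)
Your proof is correct and follows essentially the same route as the paper's: induct on $n$, identify the block $w_0^{(k)}(w_0^{(k-1)})^{-1}$, and argue that this block admits a unique reduced word because consecutive indices are adjacent and the only non-monotone spot $s_2s_1s_2$ is protected by the $m=4$ relation between $s_1$ and $s_2$. The paper's proof is extremely terse (it simply notes the Coxeter relations and says ``prove uniqueness as in the general linear case''), whereas you make the argument fully explicit---naming the block as the sign change $\beta_k$, computing $l(\beta_k)=2k-1$, exhibiting the palindrome $P_k$, and verifying via Matsumoto that no braid or commutation move applies---and you also spell out the root computation that the paper waves at.
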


\begin{proof} It is easily checked that the given sequence of simple reflections corresponds to the lexicographic pattern of positive roots (which is a second way of seeing that the sequence is compatible with the inclusions and that the appropriate truncations of the sequence correspond to reduced factorizations of the $w_0^{(n)}$). The relations for the Weyl group are the same as for $S_r$, with one difference: in place of the braid relation for $s_1,s_2$, one has the relation $s_1s_2s_1s_2=s_2s_1s_2s_1$. One can now prove uniqueness
as in the general linear case.
\end{proof}

\subsection{$D_{\infty}$: $O(2,\mathbb C)\subset O(4,\mathbb C)\subset ...\subset O(2\infty,\mathbb C)$}

We consider the vector space with (doubly infinite) ordered basis
$$...,\epsilon_n,...,\epsilon_{-n},... $$
where $n$ ranges over half-integers,
with the symmetric form for which $(\epsilon_n,\epsilon_{-n})=1$ and all other pairings vanish.
The Lie algebra $o(2\infty,\mathbb C)$ is realized as matrices which are skew-symmetric with respect
to the anti-diagonal.
We consider the (double infinite) inclusions
$$O(2,\mathbb C)\subset O(4,\mathbb C) \subset ...$$
A basis for $\mathfrak h$ compatible with this filtration is
$$\epsilon_n\otimes\epsilon_n^*-\epsilon_{-n}\otimes\epsilon_{-n}^*, \qquad n=1,2,3,...$$
Let $\lambda_1,\lambda_2,...$ denote the dual basis. The simple positive roots are
$$\alpha_1=\lambda_1+\lambda_2,\quad  \alpha_2=\lambda_2-\lambda_1,\quad \alpha_3=\lambda_3-\lambda_2,... $$
The Weyl group, viewed as linear transformations of $\mathfrak h^*$,  is realized as follows: $s_1(\lambda_1)=-\lambda_2$, $s_1(\lambda_2)=-\lambda_1$, and $s_1(\lambda_n)=\lambda_n$, $n>2$;  $s_2(\lambda_1)=\lambda_2$, $s_1(\lambda_2)=\lambda_1$, and $s_2(\lambda_n)=\lambda_n$, $n>2$; and for $n>2$, $s_n$ transposes $\lambda_n$ and $\lambda_{n-1}$, and fixes the other $\lambda_j$.

\begin{proposition}There is a (conjecturally unique) reduced sequence, compatible with the inclusions above, which is given by
$$s_1,s_2,s_1,s_2,s_3,s_2,s_1,s_3,s_2,... $$
The associated ordering of roots is lexicographic $$
\left(\begin{matrix}\lambda_4 & & &.. &.. &\tau_8 &\tau_7 & 0\\   & \lambda_3&\tau_6 &\tau_5 &\tau_4 &\tau_3 &0 & \\ & &\lambda_2 &\tau_2 &\tau_1 &0 & & \\ & & &\lambda_1 & 0& & & \\
 & & &0 & -\lambda_1& & & \\ & & 0& & &-\lambda_2& & \\ & 0& & & & & -\lambda_3& \\0 & & & & & & & -\lambda_4 \end{matrix}\right) $$
(we have not indicated the skew reflection across the antidiagonal for the $\tau$s).

\end{proposition}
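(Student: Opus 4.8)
The plan is to follow the template already used for $A_\infty$ and $B_\infty$: first show the displayed sequence is reduced and compatible with the inclusions $O(2n,\mathbb C)\subset O(2(n+1),\mathbb C)$ by identifying its associated ordering of positive roots, and then treat the (conjectural) uniqueness separately.

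For the first part I would run the algorithm recorded after Lemma~\ref{weylfactorization} in the case $w=1$. Writing the displayed sequence as $r_1,r_2,\dots$ with corresponding simple roots $\gamma_1,\gamma_2,\dots$ and setting $\tau_j:=r_1\cdots r_{j-1}\cdot\gamma_j$, I would prove by induction on $j$ that $r_1\cdots r_{j-1}\cdot\gamma_j$ is a positive root (so that the algorithm does not abort) and equals the entry in the $j$-th box of the displayed lexicographic pattern. This is a direct computation from the description of the roots as $\lambda_a\pm\lambda_b$ together with the explicit action of the $s_i$ on the $\lambda_k$ recorded just before the statement; the bookkeeping is organized ``hook by hook'', the block of the sequence added when one enlarges the subsystem by one rank accounting for exactly those positive roots of the larger $D$-subsystem that are not roots of the smaller one, which in the displayed pattern form the corresponding hook (numbered there in a fixed order, outward from the anti-diagonal). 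Once this identification is in hand, two things follow at once: by Lemma~\ref{weylfactorization} the truncation of the sequence to the first $n(n-1)$ terms is a reduced factorization of the longest element of the Weyl group of $O(2n,\mathbb C)$ (those $\tau_j$ being precisely its positive roots), and truncating the sequence is literally the same as restricting to the $D_n$-subsystem, so compatibility with the inclusions is automatic. As in the earlier cases, matching the sequence to the lexicographic pattern is essentially the only point that needs checking.

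For uniqueness I would argue as in the general linear case: the graph whose vertices are the reduced words of a fixed Coxeter-group element and whose edges are braid and commutation moves is connected, so any competing compatible sequence differs from the displayed one by such moves, and one then tries to show that at each step where the rank increases from $n$ to $n+1$ the newly appended block is forced --- its indices being adjacent to those already in use, there should be no room to rearrange it without either leaving the $D_{n+1}$-subsystem or destroying compatibility with the $D_n$-truncation.

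The hard part --- and the reason the statement claims uniqueness only conjecturally --- is exactly this forcing step, which breaks down at the branch node of the $D_n$ diagram. Unlike types $A$ and $B$, whose Dynkin diagrams are paths, the type $D$ diagram has a fork; the canonical block threads through the fork and partway back, and the two fork reflections $s_1$ and $s_2$ commute and sit in the interior of the block, where a priori they permit local rearrangements. In addition the low-rank coincidences $D_2\cong A_1\times A_1$ and $D_3\cong A_3$ must be handled by hand. I would expect that either pushing a careful forcing argument past the fork, or else exhibiting a genuinely inequivalent compatible sequence, requires real work; the approach above establishes that the displayed sequence has all the stated properties, but not that it is the only one.
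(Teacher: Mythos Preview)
Your approach is essentially the paper's: verify by direct computation that the displayed sequence of simple reflections produces the lexicographic pattern of positive roots via $\tau_j=r_1\cdots r_{j-1}\cdot\gamma_j$, and observe that this simultaneously establishes reducedness of each truncation and compatibility with the inclusions. The one difference is tone on uniqueness: the paper records the specific Coxeter relations for $D_n$ (namely that $s_1,s_2$ commute rather than braid, and $s_1,s_3$ braid rather than commute) and asserts that ``one checks uniqueness as in the general linear case,'' whereas you treat the forcing step at the fork as genuinely open --- which is consistent with the ``conjecturally'' in the statement itself.
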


\begin{proof} It is easily checked that the given sequence of simple reflections corresponds to the lexicographic pattern of positive roots (which is a second way of seeing that the sequence is compatible with the inclusions and that the appropriate truncations of the sequence correspond to reduced factorizations of the $w_0^{(n)}$). The relations for the Weyl group are the same as for $S_r$, with the following exceptions: $s_1$ and $s_2$ commute (rather than braid), and $s_1,s_3$ braid (rather than commute). Using this one checks uniqueness as in the general linear case.

\end{proof}

\subsection{$C_{\infty}$: $Sp(1,\mathbb C)\subset Sp(2,\mathbb C)\subset ...\subset Sp(\infty,\mathbb C)$}

We consider the vector space with (doubly infinite) ordered basis
$$...,\epsilon_n,...,\epsilon_{-n},... $$
where $n$ ranges over half-integers,
with the skew-symmetric form for which $\omega(\epsilon_n,\epsilon_{-n})=1$ and all other pairings vanish.
The Lie algebra $sp(2\infty,\mathbb C)$ is realized as matrices which in block form $\left(\begin{matrix}A&B\\C&D\end{matrix}\right)$
have the property that $\left(\begin{matrix}A&0\\0&D\end{matrix}\right)$ ($\left(\begin{matrix}0&B\\C&0\end{matrix}\right)$) is skew-symmetric (symmetric, respectively) with respect to the anti-diagonal.
We consider the (double infinite) inclusions
$$Sp(2,\mathbb C)\subset Sp(4,\mathbb C) \subset ...$$
A basis for $\mathfrak h$ compatible with this filtration is
$$\epsilon_n\otimes\epsilon_n^*-\epsilon_{-n}\otimes\epsilon_{-n}^*, \qquad n=\frac12,\frac32,...$$
Let $\lambda_1,\lambda_2,...$ denote the dual basis. The simple positive roots are
$$\alpha_1=2\lambda_1,\quad  \alpha_2=\lambda_2-\lambda_1,\quad \alpha_3=\lambda_3-\lambda_2,... $$
Viewed as linear transformations of $\mathfrak h^*$,
$s_1(\lambda_1)=-\lambda_1$ and $s_1(\lambda_n)=\lambda_n$, $n>1$, and for $n>1$, $s_n$ transposes $\lambda_n$ and $\lambda_{n-1}$, and fixes the other $\lambda_j$.

The Weyl group, and the associated inclusions, are identical to the odd orthogonal case. Consequently all that remains is to indicate the pattern for the ordering of the positive roots (it is not the obvious lexicographic ordering!).

\begin{proposition}There is a (conjecturally unique) reduced sequence, compatible with inclusions above, which is given by
$$s_1,s_2,s_1,s_2,s_3,s_2,s_1,s_2,s_3,s_4,...$$
The associated ordering of roots is
 $$\left(\begin{matrix}\lambda_4 &\tau_{16} &\tau_{15} &\tau_{14} &\tau_{12} &\tau_{11} &\tau_{10} &\tau_{13} \\   & \lambda_3&\tau_9 &\tau_8 &\tau_6 &\tau_5 &\tau_7 & \\ & &\lambda_2 &\tau_4 &\tau_2 &\tau_3 & & \\ & & &\lambda_1 &\tau_1 & & & \\
 & & & & -\lambda_1& & & \\ & & & & &-\lambda_2& & \\ & & & & & & -\lambda_3& \\ & & & & & & & -\lambda_4 \end{matrix}\right) $$
(we have not indicated the appropriate reflection across the antidiagonal for the $\tau$s), i.e.
$$\tau_1=2\lambda_1,\quad\tau_2=\lambda_2+\lambda_1,\quad\tau_3=2\lambda_2,\quad\tau_4=\lambda_2-\lambda_1,$$
$$\quad\tau_5=\lambda_2+\lambda_3,  ,\quad \tau_6=\lambda_1+\lambda_3,\quad \tau_7=2\lambda_3,\quad \tau_8=\lambda_3-\lambda_1,\quad \tau_9=\lambda_3-\lambda_2,$$
$$\tau_{10}=\lambda_{4}+\lambda_3,\quad \tau_{11}=\lambda_{4}+\lambda_2,
\quad \tau_{12}=\lambda_{4}+\lambda_1,\quad \tau_{13}=2\lambda_4,\quad \tau_{14}=\lambda_{4}-\lambda_1, ... $$

\end{proposition}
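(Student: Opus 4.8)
The plan is to reduce almost everything to the $B_\infty$ case and then carry out one explicit root computation. As the statement observes, the Weyl group here --- the group of signed permutations acting on the $\lambda_i$ --- together with the chain of parabolic subgroups, coincides with that of the odd orthogonal case treated in the previous subsection, and the displayed sequence $s_1,s_2,s_1,s_2,s_3,s_2,s_1,s_2,s_3,s_4,\dots$ is literally the same one. Hence its compatibility with the inclusions $Sp(2,\mathbb C)\subset Sp(4,\mathbb C)\subset\cdots$ and its uniqueness (to the same extent as in the $B_\infty$ case) follow exactly as there. What genuinely has to be checked is that this reduced sequence induces the displayed ordering of the \emph{$C_\infty$} positive roots; the ordering differs from the lexicographic one obtained for $B_\infty$ precisely because the $C$ simple roots --- $\alpha_1=2\lambda_1$ and $\alpha_k=\lambda_k-\lambda_{k-1}$ for $k\ge 2$ --- are not the $B$ simple roots.

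First I would record the block structure of the word: writing $r_1,r_2,\dots$ for its terms (so $r_1=s_1,r_2=s_2,r_3=s_1,\dots$), it is the concatenation of the palindromic blocks
$$B_k=s_k\,s_{k-1}\cdots s_2\,s_1\,s_2\cdots s_{k-1}\,s_k,\qquad k=1,2,3,\dots,$$
each of length $2k-1$, so that the truncation after $B_k$ has length $1+3+\cdots+(2k-1)=k^2=l(w_0^{(k)})$, where $w_0^{(k)}$ denotes the longest element of the Weyl group of $Sp(2k,\mathbb C)$. Then I would run a simultaneous induction on $k$ establishing: (i) $r_{k^2}\cdots r_1$ is a reduced word for $w_0^{(k)}$ (so also $r_1\cdots r_{k^2}=w_0^{(k)}$, since $w_0$ is an involution and reflections are), and (ii) the roots $\tau_j=r_1\cdots r_{j-1}\cdot\gamma_j$ of Lemma \ref{weylfactorization}, for $(k-1)^2<j\le k^2$, are the ones displayed. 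For the inductive step one uses $r_1\cdots r_{(k-1)^2}=w_0^{(k-1)}$, which as a signed permutation sends $\lambda_i\mapsto-\lambda_i$ for $i<k$ and fixes $\lambda_k,\lambda_{k+1},\dots$, and writes $\tau_j=w_0^{(k-1)}\cdot(\text{partial product inside }B_k)\cdot\gamma_j$. The first reflection $s_k$ of $B_k$ gives $w_0^{(k-1)}(\alpha_k)=w_0^{(k-1)}(\lambda_k-\lambda_{k-1})=\lambda_k+\lambda_{k-1}$; the reflections $s_{k-1},\dots,s_2$ that follow contribute $\lambda_k+\lambda_{k-2},\dots,\lambda_k+\lambda_1$ in turn; the middle reflection $s_1$, acting on $\alpha_1=2\lambda_1$, contributes $2\lambda_k$; and the closing reflections $s_2,\dots,s_k$ contribute $\lambda_k-\lambda_1,\dots,\lambda_k-\lambda_{k-1}$. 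Thus $B_k$ produces, in this order, exactly the $2k-1$ positive roots of $C_k$ that involve $\lambda_k$, with the long root $2\lambda_k$ sitting in the middle --- which is the displayed non-lexicographic pattern, specializing to $\tau_1=2\lambda_1,\tau_2=\lambda_1+\lambda_2,\tau_3=2\lambda_2,\tau_4=\lambda_2-\lambda_1,\dots$. Since these roots are all positive and, over all blocks, account for all $k^2$ positive roots of $C_k$, the criterion for reducedness behind Lemma \ref{weylfactorization} (see \cite{varadarajan}) shows $r_{k^2}\cdots r_1$ is reduced of length $l(w_0^{(k)})$, hence represents $w_0^{(k)}$, closing the induction.

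The only real obstacle is the bookkeeping in that middle step: one must keep track of signs --- $s_1$ is the sole reflection that changes a sign, and it is exactly the long simple root $\alpha_1=2\lambda_1$ sitting in the middle of each block that gets reflected to $2\lambda_k$ and forces the non-lexicographic pattern --- and of which of the $\lambda_i$ the transpositions $s_2,\dots,s_k$ are transporting at each stage. Carrying this out for a general block $B_k$ (with base cases $k=1,2,3$ immediate and matching the displayed $C_3$, $C_4$ diagrams) is the substantive content; once it is done, reducedness, compatibility with the inclusions, and the uniqueness statement all follow from the identification with the $B_\infty$ situation.
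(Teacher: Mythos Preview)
Your proposal is correct and follows exactly the paper's approach: reduce the Weyl-group assertions to the type $B$ case (same Weyl group, same sequence), and then verify the $C$-type root ordering by direct computation. The paper's own proof dismisses this last step as ``routine''; you have actually carried it out via the palindromic block decomposition $B_k=s_k\cdots s_1\cdots s_k$, which is a clean way to organize the verification.
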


\begin{proof}The type $C$ Weyl group is the same as for type $B$. So the only thing that needs to be checked is the claim about the ordering of the positive roots, which is routine.\end{proof}

\subsection{$A_{2\infty}$: $GL(2,\mathbb C)\subset GL(4,\mathbb C)\subset...\subset GL(2\infty,\mathbb C)$}

This case is more a counter-example than an example. The rank increases by two at each stage, and somewhat surprisingly,
this allows for the possibility of infinitely many compatible orderings.

Consider the doubly infinite basis
$$...,\epsilon_{3/2},\epsilon_{1/2},\epsilon_{-1/2},\epsilon_{-3/2} \subset ...$$
and the associated inclusions
$$GL(2,\mathbb C)\subset GL(4,\mathbb C) \subset ...$$
and
$$S_2 \subset S_4 \subset ...$$
For an integer $j$, let $s_j$ denote the transposition of $-\frac12+j$ and $\frac12+j$.

We seek a reduced sequence of reflections corresponding to simple roots, $r_1,r_2,...$, such that for each $n$,
$$ r_{\frac{2n(2n-1)}{2}}...r_2r_1$$
is equal to the longest element of the Weyl group of $GL(2n,\mathbb C)$, i.e. the permutation
$$\left(\begin{matrix}-n/2&-n/2+1&...&n/2-1&n/2\\n/2&n/2-1&...&-(n/2-1)&-n/2\end{matrix}\right)$$

\begin{proposition} (a) There is a reduced sequence which is given by
$$s_0, \quad s_1,s_0,s_{-1},s_0, s_1, \quad s_2,s_1,s_0,s_{-1},s_{-2},s_{-1}, s_0,s_1,s_2,\quad s_{3},... $$
The associated ordering of roots has the following pattern
$$\left(\begin{matrix}0 &\tau_{15 } & \tau_{14 }& \tau_{13 }&\tau_{12 } &\tau_{11 } \\ &0 & \tau_{6 }&\tau_{5 } &\tau_{4 } & \tau_{7 }\\
& &0 & \tau_1&\tau_2&\tau_{8 }\\. & & &0 &\tau_{3 } &\tau_{9 } \\ & & & &0 & \tau_{10 }\\ & & & & & 0
\end{matrix}\right) $$
where $\tau_1$ is located in the $(-\frac12,\frac12)$ spot, i.e. $\tau_1=\lambda_{1/2}-\lambda_{-1/2}$.

(b) There are infinitely many other sequences which are compatible with the inclusions.
\end{proposition}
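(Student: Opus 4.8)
The plan is to treat (a) and (b) separately: part (a) is a direct combinatorial verification of the sort carried out in the preceding subsections, while part (b) reduces to one clean observation about which Weyl group elements are being realized. For (a), first organize the proposed sequence into blocks: the $n$-th block ($n=1,2,3,\dots$) consists of the $2n-1$ reflections $s_{n-1},s_{n-2},\dots,s_{-(n-1)}$ followed by the $2n-2$ reflections $s_{-(n-2)},s_{-(n-3)},\dots,s_{n-1}$, hence has $4n-3$ terms, so that blocks $1$ through $n$ together comprise the first $1+5+\dots+(4n-3)=n(2n-1)=\binom{2n}{2}$ reflections. Since $\binom{2n}{2}$ is exactly the number of positive roots of the Weyl group of $GL(2n,\mathbb C)$, it suffices, by the algorithm following Lemma~\ref{weylfactorization}, to check that the roots $\tau_j:=r_1\cdots r_{j-1}\cdot\gamma_j$ produced by this sequence are pairwise distinct positive roots occupying exactly the positions shown in the displayed staircase; distinctness together with the count $\binom{2n}{2}$ then forces every length-$\binom{2m}{2}$ prefix (for $m\le n$) to be a reduced word for the longest element $w_0^{(2m)}$, which is simultaneously the assertion of reducedness and of compatibility with the inclusions $GL(2m,\mathbb C)\subset GL(2n,\mathbb C)$.

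The staircase is verified by induction on the block number $n$, identifying the simple roots of $GL(2n,\mathbb C)$ with the adjacent transpositions of the $2n$ ``middle'' basis vectors $\epsilon_{(2n-1)/2},\dots,\epsilon_{-(2n-1)/2}$. Assuming blocks $1,\dots,n-1$ produce, in the claimed order, the positive roots of $GL(2n-2,\mathbb C)$, which fill the inner $(2n-2)\times(2n-2)$ block of the staircase, one traces block $n$: its descending half $s_{n-1},\dots,s_{-(n-1)}$ sweeps the top extreme coordinate $\epsilon_{(2n-1)/2}$ downward and the bottom extreme coordinate $\epsilon_{-(2n-1)/2}$ upward past all the inner coordinates, producing successively the roots $\lambda_k-\lambda_{-(2n-1)/2}$ (as $k$ runs through the inner indices in decreasing order) and then $\lambda_{(2n-1)/2}-\lambda_{-(2n-1)/2}$, after which its ascending half $s_{-(n-2)},\dots,s_{n-1}$ produces the remaining roots $\lambda_{(2n-1)/2}-\lambda_k$; these are precisely the $4n-3$ positive roots of $GL(2n,\mathbb C)$ not already present, in exactly the displayed order (the ``hook'' $\tau_{(n-1)(2n-3)+1},\dots,\tau_{n(2n-1)}$). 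The computation of how each partial product of the $s_j$ moves a given root uses only the mechanics already present in the proof of Proposition~\ref{forwardmap}.

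For part (b), the key reduction is that a sequence $r_1,r_2,\dots$ is compatible with the inclusions precisely when, for every $n$, the prefix $r_{n(2n-1)}\cdots r_1$ equals $w_0^{(2n)}$ (and, having the correct length, is then automatically a reduced word for it). Putting $v_n:=w_0^{(2n)}(w_0^{(2n-2)})^{-1}$ with $w_0^{(0)}:=e$, we have $w_0^{(2n)}=v_n\,w_0^{(2n-2)}$ with $\ell(w_0^{(2n)})=\ell(v_n)+\ell(w_0^{(2n-2)})$, so by the standard fact that a reduced word for a length-additive product splits uniquely into a concatenation of reduced words for the factors, compatible sequences correspond bijectively to families $(\omega_n)_{n\ge1}$, where $\omega_n$ is a reduced word for $v_n$. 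A short direct computation identifies $v_n$ as the transposition interchanging the two extreme coordinates $\epsilon_{(2n-1)/2}$ and $\epsilon_{-(2n-1)/2}$ of $GL(2n,\mathbb C)$ and fixing the inner ones, an element of length $4n-3$. For $n\ge 2$ such an end-point transposition has more than one reduced word: for instance both $s_1s_0s_{-1}s_0s_1$ and $s_{-1}s_0s_1s_0s_{-1}$ are reduced words for $v_2$, and in general the ``descend--ascend'' word of part (a) and its image under the diagram symmetry $s_j\leftrightarrow s_{-j}$ are two such words. Choosing $\omega_n$ independently for each $n\ge2$ then yields infinitely many (in fact continuum-many) compatible sequences, in particular more than one, which is the content of (b).

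The only genuine obstacle I anticipate is the organized bookkeeping in part (a)—systematically tracking the positive root contributed by each partial product $r_1\cdots r_{j-1}\cdot\gamma_j$—and once the block structure and the picture of block $n$ sweeping the new extreme coordinates past the inner ones are in place, both (a) and the identification of $v_n$ needed for (b) are routine.
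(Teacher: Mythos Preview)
Your approach is correct and rests on the same key observation as the paper's: the $n$th block of the sequence is a reduced word for the transposition $v_n$ interchanging the two extreme coordinates $\epsilon_{\pm(2n-1)/2}$. The paper's proof of (a) is a one-line appeal to exactly this fact; you unpack it further by tracing the individual $\tau_j$ through each block to recover the staircase. For (b), your bijection between compatible sequences and families $(\omega_n)$ of reduced words for the $v_n$ is a cleaner and more informative framing than the paper's, which simply applies a single braid move $s_{-(n-1)}s_{-n}s_{-(n-1)}\mapsto s_{-n}s_{-(n-1)}s_{-n}$ inside each block; both exhibit infinitely many compatible sequences, and yours makes clear why there are in fact uncountably many.

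One bookkeeping caveat in (a): in your description of block $n$, the roles of the descending and ascending halves appear to be interchanged. A direct check (e.g.\ $\tau_2=r_1\cdot\gamma_2=s_0(\lambda_{3/2}-\lambda_{1/2})=\lambda_{3/2}-\lambda_{-1/2}$ for $n=2$) shows the descending half first produces the roots of the form $\lambda_{(2n-1)/2}-\lambda_m$, and the ascending half then produces those of the form $\lambda_k-\lambda_{-(2n-1)/2}$, rather than the other way around. This has no effect on your argument for reducedness or compatibility, since all that matters there is that the block's $\tau_j$ fill the hook exactly once; it is precisely the ``organized bookkeeping'' you flagged as the one real obstacle.
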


\begin{proof} (a)
This follows from the fact that
$$s_ns_{n-1}..s_{-(n-1)}s_{-n}s_{-(n-1)}..s_n$$ transposes $\pm\frac{n}{2}$.

(b) In this expression for the transposition $(-\frac{n}{2},\frac{n}{2})$, we can using the Weyl group relations to reexpress this transposition in many different reduced ways, e.g. we can replace $s_{-(n-1)}s_{-n}s_{-(n-1)}$ by $s_{-n}s_{-(n-1)}s_{-n}$.
Since we can do this for any $n$, this leads to infinitely many sequences compatible with the inclusions.
\end{proof}

\subsection{Concluding Comment}\label{concludingcomment}

Given an $n\times n$ matrix $M$, let $M^{I}_J$ denote the submatrix
with rows indexed by a sequence $I$ and columns indexed by a sequence $J$.
If $g=(g_{ij})\in GL(n,\mathbb C)$ has a triangular factorization
$$g=l\circ d\circ u$$
then
$$d=diag(\sigma_1,\sigma_2/\sigma_1,\sigma_3/\sigma_2,..,\sigma_
n/\sigma_{n-1})$$
where $\sigma_k$ is the determinant of the $k^{th}$ principal
submatrix, i.e. $\sigma_k=det(g^{(1,...,k)}_{(1,...,k)})$, for $i>j$,
$$l_{ij}=\frac{det(g^{(1,...,j-1,i)}_{(1,...,j)})}{\sigma_j} $$
and for $i<j$
$$ u_{ij}=\frac{det(g^{(1,...,i)}_{(1,...,i-1,j)})}{\sigma_i}$$

For the canonical orderings in classical cases, based on experiments and the (LDU based) algorithm in Section \ref{rational}, it seems very likely to me that there exist (similar, but more complicated) analogues of these formulas for root subgroup coordinates.

\end{document}